\documentclass[a4paper,reqno,11pt]{amsart}
\usepackage{amsmath,amsthm}
\usepackage{amssymb,latexsym}
\usepackage{enumerate}

\numberwithin{equation}{section}

\newtheorem{theorem}{Theorem}[section]
\newtheorem{remark}[theorem]{Remark}
\newtheorem{proposition}[theorem]{Proposition}
\newtheorem{lemma}[theorem]{Lemma}
\newtheorem{definition}[theorem]{Definition}
\newtheorem{exam}[theorem]{Example}
\newtheorem{corollary}[theorem]{Corollary}
\newtheorem{The main theorem}[theorem]{The main theorem}

\theoremstyle{definition}




\frenchspacing

\textwidth=13.5cm
\textheight=23cm
\parindent=16pt
\oddsidemargin=1.1cm
\evensidemargin=1.1cm
\topmargin=-0.5cm

\allowdisplaybreaks
\begin{document}

\title[Some results on Complex $m-$subharmonic classes]{Some results on Complex $m-$subharmonic classes}

\author{ Jawher Hbil and Mohamed Zaway}
\address{Department of Mathematics\\ Jouf University \\ P.O. Box: 2014, Sakaka, Saudi Arabia.}
\address{Department of mathematics, College of science, Shaqra University,
P.O. box 1040 Ad-Dwadimi 1191, Kingdom of Saudi Arabia.}

\address{Irescomath Laboratory, Gabes University, 6072 Zrig Gabes, Tunisia.}
\email{jmhbil@ju.edu.sa}
\email{m\_zaway@su.edu.sa}
\date{}

\maketitle


\renewcommand{\thefootnote}{}

\footnote{2010 \emph{Mathematics Subject Classification}: 32W20.}

\footnote{\emph{Key words and phrases}: $m-$subharmonic function, Capacity, Hessian operator., Convergence in m$-$capacity.}

\renewcommand{\thefootnote}{\arabic{footnote}}
\setcounter{footnote}{0}

\date{}

\begin{abstract}
In this paper we study the class $\mathcal{E}_{m}(\Omega)$ of $m-$subharmonic functions introduced by Lu in \cite{L1}. We prove that the convergence in $m-$capacity implies the convergence of the associated Hessian measure for functions that belong to $\mathcal{E}_{m}(\Omega)$. Then we extend those results to the class $\mathcal{E}_{m,\chi}(\Omega)$ that depends on a given increasing real function $\chi$. A complete characterization of those classes using the Hessian measure is given as well as   a subextension theorem relative to $\mathcal{E}_{m,\chi}(\Omega)$.
\end{abstract}

\maketitle

\section{Introduction}

In complex analysis, the  Monge-Ampere operator represents the objective of several studies since Bedford and Taylor \cite{BT82, BT76} demonstrated that the operator $ (dd ^ c.)^n$ is well defined on the set of locally bounded plurisubharmonic ( psh) functions defined on an hyperconvex domain $\Omega$ of $\mathbb{C}^n$. This domain was extended by Cegrell \cite{Ce1, Ce2} by introducing and investigating the classes $\mathcal{E}_0(\Omega)$, $\mathcal{F}(\Omega)$ and $\mathcal{E}(\Omega)$ that contain unbounded psh functions. He proved that  $\mathcal{E}(\Omega)$ is the largest domain of definition of the complex Monge-Ampere operator if we want the operator to be continuous for decreasing sequences. These works were taken up by Lu \cite{L1, L2} to define the complex Hessian operator $H_m$ on the set of $m-$subharmonic functions which coincides with the set of psh functions in the case $m = n$. By giving an analogy to Cegrell's classes, Lu studied some  analogous  classes denoted by $\mathcal{E}_{m}^0(\Omega)$, $\mathcal{F}_m(\Omega)$ and $\mathcal{E}_m(\Omega)$. One of the most well-known problems in this direction is the link between the convergence in capacity $Cap_m$ and the convergence of the complex Hessian operator.
The paper is organized as follows:
In section 2 we recall some preliminaries on the pluripotential theory for $m-$subharmonic function as well as the different energy classes which will be studied throughout the paper.\\
 In  section 3  we will be interested on giving a connection between  the convergence in capacity $Cap_m$ of a sequence of m-subharmonic functions $f_j$ toward $f$,  $\displaystyle liminf_{j} H_m (f_j)$ and  $H_m (f)$ when the function $f\in\mathcal E_m(\Omega)$. More precisely we prove the following theorem

\noindent
{\bf Theorem A. }

  If $(f_j)_j$ is a sequence of $m-$subharmonic function that belong to  $\mathcal E_m(\Omega)$ and satisfies $f_j\rightarrow f\in\mathcal E_m(\Omega)$ in $Cap_{m}$-capacity. Then
 $$ 1_{\{f>-\infty\}}H_m(f)\leq \liminf_{j\to+\infty} H_m(f_j).$$

As a consequence of Theorem A we obtain several results of convergence and especially we prove that if we modify the sufficient condition in the previous theorem, one may obtain the weak convergence of $H_m(f_j)$ to $H_m(f)$.\\

 In Section 4,  We will study the classes $\mathcal{E}_{m,\chi}(\Omega)$ introduced by Hung \cite{Hung} for a given increasing function $\chi$. Those classes generalized  the weighted pluricomplex energy classes investigated by Benelkourchi, Guedj and Zeriahi\cite{C-Z-Bel} and studied by \cite{Slimane, Slimane1, Hai}. We prove first the class $\mathcal{E}_{m,\chi}(\Omega)$ is fully included in the Cegrell class $\mathcal{E}_m(\Omega)$ and hence the Hessian operator $H_m(f)$ is well defined for every $f\in\mathcal{E}_{m,\chi}(\Omega)$. Then we will be interested on giving several results of the class $\mathcal{E}_{m,\chi}(\Omega)$ depending on some condition on the function $\chi$. Those results generalizes well know works in \cite{Slimane} and \cite{C-Z-Bel} it suffices to take $m=n$ to recover them. The most important result that we prove in this context is the given of a complete characterization for functions that belong to $\mathcal{E}_{m,\chi}(\Omega)$ using the class $\mathcal{N}_m(\Omega)$. In other words we show that
 $${\mathcal E}_{m,\chi}(\Omega)=\left\{ f \in {\mathcal N}_m(\Omega) \, / \,
\chi(f) \in L^1(H_m(f)) \right\}.$$
In the end we extend Theorem A to the class $\mathcal{E}_{m,\chi}(\Omega)$ by proofing the following result

{\bf Theorem B. }

 Let $\chi: \mathbb{R}^- \rightarrow \mathbb{R}^-$ be a continuous  increasing function such that $\chi(-\infty)>-\infty$ and $f, f_j\in \mathcal{E}_{m}(\Omega)$ for all $j\in \mathbb{N}$. Suppose that there is a function $g\in \mathcal{E}_{m}(\Omega)$ satisfying $f_j\geq g$ then:
  \begin{enumerate}
    \item If $f_j$ converges to $f$ in $Cap_{m-1}-$capacity then $\displaystyle\liminf_{j\rightarrow +\infty}-\chi(f_j)H_m(f_j)\geq -\chi(f)H_m(f).$
    \item If $f_j$ converges to $f$ in $Cap_{m}-$capacity then $-\chi(f_j)H_m(f_j)$ converges weakly to $-\chi(f)H_m(f).$
  \end{enumerate}

\section{Preliminaries}\label{1}
\subsection{m-subharmonic functions}
This section is devoted to recall some basic properties of $m-$subharmonic functions introduced by Blocki \cite{Bl1}. Those functions are admissible for the complex Hessian equation.
Throughout  this paper we denote by  $d:=\partial +\overline{\partial}$ ,$d^c:=i (\overline{\partial}-\partial)$ and  by $\Lambda_{p}(\Omega)$ the set of $(p,p)-$forms  in $\Omega$. The standard K$\ddot{a}$hler form defined on $\mathbb{C}^n$ will be denoted as $\beta:=dd^c|z|^2$.
\begin{definition}\cite{Bl1}\\

Let $\zeta\in\Lambda_{1}(\Omega)$  and $m\in \mathbb{N} \cap [1,n]$. The form $\zeta$ is called $m-$positive if it satisfies

$$\zeta^j\wedge\beta^{n-j}\geq 0, \ \ \forall j=1,\cdots,m$$
at every point of $\Omega$.

\end{definition}

\begin{definition}\cite{Bl1}\\

Let $\zeta\in\Lambda_{p}(\Omega)$ and $m\in \mathbb{N} \cap [p,n]$.
The $\zeta$ is  said to be $m-$positive on $\Omega$ if and only if the  measure
$$\zeta\wedge\beta^{n-m}\wedge\psi_1\wedge \cdots \wedge \psi_{m-p}$$is positive at  every point of $\Omega$
where $\psi_1,\cdots,\psi_{m-p}\in\Lambda_{1}(\Omega)$

\end{definition}
We will denote by $\Lambda_{p}^m(\Omega)$ the set of all $(p,p)-$forms on $\Omega$ that are $m-$positive.
In 2005, Blocki \cite{Bl1} introduced the notion of $m-$subharmonic functions and developed an analogous pluripotential theory. This notion is given in the following definition:
\begin{definition}
Let $f: \Omega\rightarrow \mathbb{R}\cup \{-\infty\}$. The function $f$ is called $m$-subharmonic if it satisfies the following:
\begin{enumerate}
  \item The function $f$ is subharmonic.
  \item For all  $\zeta_1, \cdots,\zeta_{m-1} \in \Lambda_{1}^m(\Omega)$ one has
  $$dd^cf\wedge \beta^{n-m}\wedge \zeta_1\wedge\cdots\wedge\zeta_{m-1}\geq 0$$

\end{enumerate}
\end{definition}
     We denote by $\mathcal{SH}_{m}(\Omega)$ the cone of  $m-$subharmonic functions defined on $\Omega$.
    \begin{remark}
  \begin{enumerate}
  In the case $m=n$ we have the following
   \item The  definition of $m-$positivity  coincides with  the classic definition of positivity given by Lelong for forms.
\item  The set  $\mathcal{SH}_{n}(\Omega)$ coincides with the set of psh functions on $\Omega$.
 \end{enumerate}
 \end{remark}

One can refer to \cite{Bl1}, \cite{SA}, \cite{HP} and \cite{L1} for more details about the properties of $m-$subharmonicity.\\
\begin{exam}
   \begin{enumerate}
     \item If $\zeta:=i( 4.dz_1\wedge d\overline{z}_1+4.dz_2\wedge d\overline{z}_2-dz_3\wedge d\overline{z}_3)$ then $\zeta\in \Lambda_{1}^2(\mathbb{C}^3)\setminus\Lambda_{1}^3(\mathbb{C}^3).$
     \item If  $f(z):=-|z_1|^2+2|z_2|^2+2|z_3|$ then  $f\in \mathcal{SH}_{2}(\mathbb{C}^3)\setminus \mathcal{SH}_{3}(\mathbb{C}^3)$.
     It is easy to see that $f \in \mathcal{SH}_{2}$. However, the restriction of $f$ on the line $( z_1, 0,0)$ is not subharmonic so $f$ is not a plurisubharmonic.
   \end{enumerate}
 \end{exam}

Following Bedford and Taylor \cite{BT76}, one can  define, by induction a closed nonnegative current when the function $f$ is  $m$-sh functions and   locally bounded as follows:
$$dd^cf_1\wedge\ldots\wedge dd^cf_k\wedge\beta^{n-m}:=dd^c(f_1dd^cf_2\wedge\ldots\wedge dd^cf_k\wedge\beta^{n-m}),$$
where $f_1,\ldots, f_k\in \mathcal{SH}_m(\Omega)\cap L_{loc}^{\infty}(\Omega).$
In particular, for a given $m-$sh function  $f\in \mathcal{SH}_m(\Omega)\cap L_{loc}^{\infty}(\Omega)$, we define the nonnegative Hessian measure of $f$ as follows
$$H_m(f)=(dd^cf)^m\wedge\beta^{n-m}.$$

\subsection{Cegrell classes of $m$-sh functions and $m-$capacity}
\begin{definition}
\begin{enumerate}
  \item A bounded domain $\Omega$  in $\mathbb{C}^{n}$ is said to be $m$-hyperconvex if the following property holds for some   continuous $m$-sh function $\rho:\; \Omega \rightarrow \mathbb{R}^{-}$:
 $$\{\rho<c\}\Subset\Omega,$$ for every $c<0.$
  \item A  set $M\subset \Omega$ is called $m-$polar if there exist $u\in \mathcal{SH}_m(\Omega)$ such that
    $$M\subset \{u=-\infty\}.$$
\end{enumerate}

\end{definition}
Throughout the rest of the paper, we denote by $\Omega$  a  $m$-hyperconvex domain of $\mathbb{C}^{n}$.
In \cite{L1} and \cite{L2}, Lu introduced the following classes of $m$-sh functions to generalize Cegrell's classes. We recall below the definitions of those classes.
\begin{definition}
We denote by:
  $$\mathcal{E}^0_m(\Omega)= \{f\in \mathcal{SH}_{m}^{-}(\Omega)\cap L^{\infty} (\Omega);\  \displaystyle\lim_{z\rightarrow \xi}f(z)=0\ \forall\xi\in\partial\Omega\  ,\ \displaystyle\int_{\Omega}H_m(f)<+\infty\},$$
     $$\mathcal{F}_m(\Omega)= \{f\in \mathcal{SH}_{m}^{-}(\Omega);\  \exists(f_j) \subset \mathcal{E}_m^0,\ f_j \searrow f\  in \ \Omega\, \ \displaystyle\sup_j\int_\Omega H_m(f_j)<+\infty\}.$$

and $$\mathcal{E}_m(\Omega)=\{f \in \mathcal{SH}_{m}^{-}(\Omega): \forall U\Subset\Omega,\exists\ f_U\in \mathcal{F}_m(\Omega);\   f_U=f\ on\ U\}.$$

\end{definition}

\begin{definition}
A function $f\in\mathcal{SH}_m(\Omega)$ is said to be $m$-maximal if for every  $g\in\mathcal{SH}_m(\Omega)$ such that if $g\leq f$ outside a compact subset of $\Omega$ then $g\leq f$ in $\Omega$.
\end{definition}
The previous notion represents an essential tool in the study of the Hessian operator since Blocki \cite{Bl1} showed that every  $m$-maximal function $f\in\mathcal{E}_{m}(\Omega)$ satisfies $H_{m}(f)=0.$
Take $(\Omega_j)_j$ a sequence of strictly $m$-pseudoconvex subsets of $\Omega$ such that $\Omega_j \Subset \Omega_{j+1}$, $\displaystyle\bigcup^{\infty}_{j=1}\Omega_j=\Omega$ and for every $j$ there exists a smooth strictly $m-$subharmonic function $\varphi$ in  a neighborhood $V$ of $\Omega_j$ such that $\Omega_j:= \{z\in V / \varphi(z) < 0\}$.
\begin{definition}\label{3}
Let $f \in \mathcal{SH}^{-}_{m}(\Omega)$ and  $(\Omega_j)_j$ be the  sequence defined above. Take  $f^j$
 the function defined by:
  $$f^j=\sup\left\{\psi \in \mathcal{SH}_{m}(\Omega):\; \psi_{|_{\Omega\setminus\Omega_j}}\leq f\right\}\in\mathcal{SH}_{m}(\Omega),$$
and define $\widetilde{f}:=(\displaystyle\lim_{j\rightarrow+\infty}f^j)^{*},$ called the smallest maximal $m$-subharmonic function majorant of $f$.
\end{definition}
It is clear that   $f\leq f^{j}\leq f^{j+1}$, so  $\displaystyle\lim_{j\rightarrow+\infty}f^j$ exists  on $\Omega$  except at an $m$-polar set, we deduce that $\widetilde{f}\in\mathcal{SH}_{m}(\Omega).$ Moreover, if $f\in \mathcal{E}_{m}(\Omega)$ then by  \cite{L2} and  \cite{Bl1} $\widetilde{f}\in\mathcal{E}_{m}(\Omega)$  and it is $m$-maximal on $\Omega.$ We denote  $\mathcal{MSH}_{m}(\Omega)$ is the family of $m$-maximal functions in $\mathcal{SH}_{m}(\Omega)$.\\
We cite below some useful properties of $\mathcal{MSH}_{m}(\Omega)$.
\begin{proposition}\cite{Bl1}
 Let $f, g \in\mathcal{E}_{m}(\Omega)$ and $\alpha\in \mathbb{R}$, $\alpha\geq0,$ then we have
 \begin{enumerate}
   \item $\widetilde{f+g}\geq \widetilde{f}+\widetilde{g}.$
   \item $\widetilde{\alpha f}=\alpha\widetilde{f}.$
   \item If $f\leq g$ then $\widetilde{f}\leq\widetilde{g}.$
   \item $\mathcal{E}_{m}(\Omega)\cap \mathcal{MSH}_{m}(\Omega)=\{f\in\mathcal{E}_{m}:\;\widetilde{f}=f\}.$
 \end{enumerate}
\end{proposition}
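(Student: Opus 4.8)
The plan is to handle the four assertions separately, each by unravelling the definition of the operator $f\mapsto\widetilde f$ and invoking two standard facts about $m$-subharmonic functions: (a) if $u_j\nearrow u$ with all $u_j\in\mathcal{SH}_m(\Omega)$, then the ``negligible'' set $\{u<u^*\}$ is $m$-polar, and every $m$-polar set is Lebesgue-null (it is contained in $\{w=-\infty\}$ for some $w\in\mathcal{SH}_m(\Omega)\subset L^1_{loc}(\Omega)$); and (b) if $u,v\in\mathcal{SH}_m(\Omega)$ and $u\le v$ almost everywhere, then $u\le v$ everywhere — this follows from the sub-mean value inequality for $u$ together with the fact that the solid spherical means of $v$ decrease to $v$ as the radius shrinks.

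I would start with the monotonicity statement (3), which is the prototype. If $f\le g$ and $\psi\in\mathcal{SH}_m(\Omega)$ is admissible in the supremum defining $f^j$, i.e. $\psi\le f$ on $\Omega\setminus\Omega_j$, then a fortiori $\psi\le g$ on $\Omega\setminus\Omega_j$, so $\psi$ is also admissible for $g^j$; hence $f^j\le g^j$ for every $j$, and taking the increasing limit in $j$ and then the upper regularization yields $\widetilde f\le\widetilde g$. Assertion (2) runs the same way: for $\alpha>0$ the dilation $\psi\mapsto\alpha^{-1}\psi$ is a bijection of $\mathcal{SH}_m(\Omega)$ carrying the constraint $\psi\le\alpha f$ on $\Omega\setminus\Omega_j$ to $\alpha^{-1}\psi\le f$ on $\Omega\setminus\Omega_j$, so $(\alpha f)^j=\alpha f^j$, and since upper regularization commutes with multiplication by the positive constant $\alpha$ one gets $\widetilde{\alpha f}=\alpha\widetilde f$; the case $\alpha=0$ is trivial.

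For the superadditivity (1), if $\psi_1\le f$ and $\psi_2\le g$ on $\Omega\setminus\Omega_j$, then $\psi_1+\psi_2\in\mathcal{SH}_m(\Omega)$ and $\psi_1+\psi_2\le f+g$ on $\Omega\setminus\Omega_j$, hence $f^j+g^j\le (f+g)^j$; letting $j\to+\infty$ gives $\lim_jf^j+\lim_jg^j\le\lim_j(f+g)^j$ pointwise on $\Omega$. By fact (a) these three limits agree with $\widetilde f,\widetilde g,\widetilde{f+g}$ off an $m$-polar set, and since $f,g\in\mathcal{E}_m(\Omega)$ guarantees that $\widetilde f+\widetilde g$ and $\widetilde{f+g}$ are genuine (not $\equiv-\infty$) $m$-subharmonic functions, fact (b) upgrades the a.e. inequality $\widetilde f+\widetilde g\le\widetilde{f+g}$ to one valid everywhere. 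For (4), the inclusion $\supseteq$ is immediate from the remark preceding the proposition, namely that for $f\in\mathcal{E}_m(\Omega)$ the function $\widetilde f$ is always $m$-maximal, so $\widetilde f=f$ forces $f\in\mathcal{MSH}_m(\Omega)$. Conversely, let $f\in\mathcal{E}_m(\Omega)$ be $m$-maximal; any $\psi$ admissible for $f^j$ satisfies $\psi\le f$ on $\Omega\setminus\Omega_j$, hence outside the compact subset $\overline{\Omega_j}$ of $\Omega$, so $m$-maximality gives $\psi\le f$ on all of $\Omega$, whence $f^j\le f$; combined with the obvious $f\le f^j$ this gives $f^j=f$ for every $j$, so $\widetilde f=(\lim_jf^j)^*=f^*=f$ by upper semicontinuity of $f$.

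The step that needs the most care is (1): upper regularization is not additive, and the inequality $(u+v)^*\le u^*+v^*$ points the wrong way, so one cannot simply regularize the inequality $f^j+g^j\le (f+g)^j$ term by term — the argument must pass through the $m$-polar exceptional set and the ``almost everywhere implies everywhere'' principle (b). The remaining three assertions amount to bookkeeping with the defining supremum, with no analytic obstacle.
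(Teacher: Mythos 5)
Your proof is correct. The paper states this proposition without proof, citing it to B\l{}ocki, so there is no in-paper argument to compare against; your direct verification from the definition of $f^j$ and $\widetilde{f}$ is the standard one, and you rightly isolate the only delicate point, namely that in assertion (1) the upper regularization cannot be distributed over the sum and one must instead pass through the $m$-polar negligible set $\{\lim_j f^j<\widetilde{f}\}$ and the ``a.e.\ implies everywhere'' principle for ($m$-)subharmonic functions.
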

In \cite{Van}, author introduced a new Cegrell class $\mathcal{N}_m(\Omega):=\{f \in \mathcal{E}_{m}:\; \widetilde{f}=0\}.$
It is easy to check that  $\mathcal{N}_m(\Omega)$ is a convex cone satisfying
$$\mathcal{E}^{0}_m(\Omega)\subset \mathcal{F}_m(\Omega)\subset \mathcal{N}_m(\Omega)\subset \mathcal{E}_m(\Omega).$$
\begin{definition}
Let $\mathcal{L}_m \in \{\mathcal{F}_m, \mathcal{N}_m, \mathcal{E}_{m}\}$.
 We define
  $$\mathcal{L}^a_m(\Omega):=\{f \in \mathcal{L}_{m}:\; H_{m}(f)(P)=0,\; \forall P \;m\hbox{-polar\; set}\}.$$
  \end{definition}
\begin{definition}
\begin{enumerate}
  \item Let $E$ be a  Borel subset of $\Omega$. The $Cap_s$-capacity of a $E$ with respect to $\Omega$ is given as follows:
$$Cap_{s}(E)=Cap_{s}(E,\Omega)=\sup\left\{\int_{E}H_s(f)\;,\;f\in \mathcal{SH}_{m}(\Omega), -1\leq f\leq 0 \right\}$$
where $1\leq s\leq m$.
  \item We say that a sequence $(f_j)_j$, of real-valued borel measurable functions  defined on $\Omega$,  converges to $f$ in $Cap_{s}$-capacity, when $j\rightarrow+\infty$ if for every compact subset $K$ of $\Omega$ and  $\varepsilon> 0$ the following limit holds
$$\displaystyle\lim_{j\rightarrow+\infty}Cap_{s}(\{z\in K: |f_j(z)-f(z)|>\varepsilon\})=0.$$
\item For a given Borel subset $E\subset \Omega$, the outer $s-$capacity  $Cap_{s}^{\star}$ of $E$  is defined as
$$Cap_{s}^{\star}(E,\Omega):=inf \{Cap_{s}(F,\Omega);\ E\subset F\ and\ F\ is\ an\ open\ subset\ of\ \Omega\}.$$
  \end{enumerate}
\end{definition}
\begin{remark}
  For a given subset $E$ of $\Omega$ one can defined $h_{E,\Omega}$ as follows
  $$h_{E,\Omega}:=\sup\{f(z); f\in \mathcal{SH}^-(\Omega): f\leq -1\ on\ E\}.$$
  Using the definitions above and Theorem 2.20 in \cite{L1}, we have the following
  $$Cap_{m}^{\star}(E,\Omega)=\int_{\Omega}H_m(h_{E,\Omega}^{*})$$
  where $h_{E,\Omega}^{*}$ is the smallest upper semicontinuous function majorant of $h_{E,\Omega}$.
\end{remark}
\section{Convergence in $Cap_m-$Capacity}

\begin{proposition}( See \label{01}\cite{HP} and  \cite{A})
 \begin{enumerate}
   \item For every $f, g\in\mathcal E_m(\Omega)$, such that $g\leq f$ one has
   $$1_{\{f=-\infty\}}H_m(f)\leq1_{\{g=-\infty\}}H_m(g)$$
   \item  If $f\in\mathcal E_m(\Omega)$, and $g\in\mathcal E_m^a(\Omega)$ then
    $$1_{\{f+g=-\infty\}}H_m(f+g)\leq1_{\{f=-\infty\}}H_m(f)$$
 \end{enumerate}
\end{proposition}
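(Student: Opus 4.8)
The plan is to read both inequalities as comparisons of the \emph{polar part} of the Hessian measure, and the one elementary fact used throughout is: if $u\in\mathcal{SH}_m(\Omega)\cap L^{\infty}_{loc}(\Omega)$ then $H_m(u)$ carries no mass on $m$-polar sets (Bedford--Taylor type, see \cite{Bl1}). Consequently, for any $f\in\mathcal E_m(\Omega)$ the measure $H_m(f)$ coincides on each open set $\{f>-N\}$ with $H_m(\max(f,-N))$, so $1_{\{f>-\infty\}}H_m(f)$ charges no $m$-polar set and $1_{\{f=-\infty\}}H_m(f)$ is exactly the polar part of $H_m(f)$. I will use this remark in both parts.

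For part $(1)$ I would first reduce to the case $f,g\in\mathcal F_m(\Omega)$. Since $H_m$ acts locally on $\mathcal E_m(\Omega)$, it suffices to prove the estimate on a prescribed $U\Subset\Omega$; pick $f_U,g_U\in\mathcal F_m(\Omega)$ with $f_U=f$, $g_U=g$ on $U$, replace $f_U$ by $\max(f_U,g_U)\in\mathcal F_m(\Omega)$ (still equal to $f$ on $U$), and one is reduced to the $\mathcal F_m$-case with the order $g\le f$ now global. Next set $f^j:=\max(f,-j)$, $g^j:=\max(g,-j)$; these are bounded, $g^j\le f^j$, and $f^j\searrow f$, $g^j\searrow g$, hence $H_m(f^j)\to H_m(f)$ and $H_m(g^j)\to H_m(g)$ weakly by continuity of $H_m$ along decreasing sequences in $\mathcal E_m(\Omega)$ \cite{L1,L2}. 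By locality $1_{\{f>-j\}}H_m(f^j)=1_{\{f>-j\}}H_m(f)$, so $1_{\{f=-\infty\}}H_m(f)$ is the weak limit of the nonnegative measures $1_{\{f\le -j\}}H_m(f^j)$, and similarly for $g$; one then compares these two limits by applying the comparison principle for bounded $m$-subharmonic functions to the pair $g^j\le f^j$ on the sublevel sets $\{f<-j\}\subset\{g<-j\}$, on which $f^j$ and $g^j$ are constant and so have vanishing Hessian. I expect the main obstacle to be precisely the passage to the limit: one must control, uniformly in $j$, the masses of $H_m(f^j)$ and $H_m(g^j)$ in shrinking neighbourhoods of $\{f=-\infty\}$ so that the inequality survives; this is where the monotonicity built into the comparison principle, together with the fact that the bounded truncations do not see $m$-polar sets, is used.

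For part $(2)$ I would avoid the comparison principle and argue directly with mixed Hessian currents. Put $h:=f+g$ and expand $H_m(h)=(dd^ch)^m\wedge\beta^{n-m}$ as $\sum_{k=0}^{m}\binom{m}{k}(dd^cf)^k\wedge(dd^cg)^{m-k}\wedge\beta^{n-m}$, an identity one first checks for bounded $f,g$ and then extends to $\mathcal E_m(\Omega)$ by approximation through truncations, the mixed products being well defined there. The term $k=m$ is $H_m(f)$; every term with $k\le m-1$ contains at least one factor $dd^cg$ with $g\in\mathcal E_m^a(\Omega)$, hence puts no mass on any $m$-polar set (see \cite{HP}, \cite{A}). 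Since $\{h=-\infty\}=\{f=-\infty\}\cup\bigl(\{g=-\infty\}\cap\{f>-\infty\}\bigr)$ and all these sets are $m$-polar, multiplying the expansion by $1_{\{h=-\infty\}}$ annihilates all cross terms and gives $1_{\{h=-\infty\}}H_m(h)=1_{\{h=-\infty\}}H_m(f)$; but $1_{\{g=-\infty\}\cap\{f>-\infty\}}H_m(f)=0$ by the basic fact of the first paragraph (on $\{f>-N\}$, $H_m(f)=H_m(\max(f,-N))$ charges no $m$-polar set), whence $1_{\{h=-\infty\}}H_m(h)=1_{\{f=-\infty\}}H_m(f)$, which yields the claimed inequality, in fact with equality. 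Here the only genuine obstacle is a foundational one: the validity of the binomial expansion for mixed Hessian currents of functions of $\mathcal E_m(\Omega)$, and the fact that a mixed Hessian product having at least one factor from $\mathcal E_m^a(\Omega)$ puts no mass on $m$-polar sets, both of which I would quote from \cite{HP}, \cite{A} and \cite{Van}.
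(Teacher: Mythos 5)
A preliminary remark: the paper does not prove this proposition at all — it is imported from \cite{HP} and \cite{A} — so there is no in-paper argument to compare yours with; I am judging your proposal on its own merits.

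Your part $(2)$ is essentially correct. The multilinearity of the mixed Hessian on $\mathcal E_m(\Omega)$ gives the binomial expansion, the inequality $\int_P dd^cu_1\wedge\cdots\wedge dd^cu_m\wedge\beta^{n-m}\leq \prod_j\bigl(\int_P H_m(u_j)\bigr)^{1/m}$ for $m$-polar $P$ (Lemma 5.6 of \cite{HP}, which this very paper invokes in Proposition \ref{03}) annihilates every term containing a factor $dd^cg$ with $g\in\mathcal E_m^a(\Omega)$, and your plurifine-locality remark correctly disposes of $1_{\{g=-\infty\}\cap\{f>-\infty\}}H_m(f)$. This is the same mechanism the paper uses in its proof of Proposition \ref{03}(2).

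Part $(1)$, however, has a genuine gap at the decisive step. The reduction to $\mathcal F_m(\Omega)$ and the identification $1_{\{f=-\infty\}}H_m(f)=\lim_j 1_{\{f\leq -j\}}H_m(\max(f,-j))$ are fine, but the comparison of the two limits is not actually performed. First, the comparison principle applied to the pair $g^j\leq f^j$ controls integrals over $\{f^j<g^j\}$, which is empty, so it yields nothing. Second, your parenthetical justification — that $f^j$ and $g^j$ are constant on the sublevel sets and "so have vanishing Hessian" — is true only on the plurifine open set $\{f<-j\}$, whereas all the mass of $1_{\{f\leq -j\}}H_m(f^j)$ is carried by $\{f=-j\}$; read literally, your claim would force both polar parts to vanish and trivialize the statement. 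Worse, no term-by-term comparison of $1_{\{f\leq -j\}}H_m(f^j)$ with $1_{\{g\leq -j\}}H_m(g^j)$ can hold: for $m=n=1$, $f=\log|z|$, $g=2\log|z|$, these are arc-length measures on two different circles, mutually singular for every $j$; only their weak limits are comparable. So the inequality must be produced by a global argument, not by comparing the truncated measures pointwise. Two standard repairs: (i) swap one factor at a time, proving that $1_P\,(dd^cf)^k\wedge(dd^cg)^{m-k}\wedge\beta^{n-m}$ is nondecreasing in the number of $dd^cg$ factors for $m$-polar $P$ when $g\leq f$; or (ii) after reducing to $\mathcal F_m(\Omega)$, integrate against $-w$ with $w\in\mathcal E_m^0(\Omega)$ and integrate by parts $m$ times, using $-f\leq -g$ at each step, to get $\int_\Omega -w\,H_m(f)\leq \int_\Omega -w\,H_m(g)$, and then conclude with Proposition \ref{02} of this paper together with the fact that $1_{\{f>-\infty\}}H_m(f)$ charges no $m$-polar set. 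Either route supplies exactly the step your sketch leaves open.
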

\begin{proposition}\label{02}
 For every non-negative measures $\mu$, $\nu$  on $\Omega$, satisfying $(\mu + \nu)(\Omega) < \infty$ and $\int_{\Omega}-f d\mu\geq\int_{\Omega}-f d\nu$  for all $f\in\mathcal E_m^0(\Omega)$, one has $\mu(K) \geq\nu(K)$ for all complete $m-$polar subsets $K$ in $\Omega$.
\end{proposition}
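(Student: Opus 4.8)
The plan is to realise $1_{K}$ as the pointwise limit of a sequence $-f_{i}$ with $f_{i}\in\mathcal E_m^0(\Omega)$, to insert each $f_{i}$ into the hypothesis, and to pass to the limit; the passage is legitimate because $(\mu+\nu)(\Omega)<\infty$, so dominated convergence applies against the bound $0\le-f_{i}\le 1$. Before carrying this out I would record two structural facts. Since $K$ is \emph{complete} $m$-polar there is a negative $m$-subharmonic function $\varphi\not\equiv-\infty$ on $\Omega$ with $\{\varphi=-\infty\}=K$; and since $K$ is in particular $m$-polar and $\Omega$ is $m$-hyperconvex, the $m$-subharmonic analogue of Cegrell's description of pluripolar sets provides $w_{0}\in\mathcal F_m(\Omega)$ with $K\subseteq\{w_{0}=-\infty\}$. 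I would also fix, once and for all, a continuous $\psi_{0}\in\mathcal E_m^0(\Omega)$ with $\psi_{0}<0$ on $\Omega$.

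Set $w:=\max(w_{0},\varphi)$. Then $w$ is $m$-subharmonic and $w_{0}\le w\le 0$, so $w\in\mathcal F_m(\Omega)$, using that $\mathcal F_m$ is stable under replacing a function by a larger negative $m$-subharmonic function; moreover $\{w=-\infty\}=\{w_{0}=-\infty\}\cap\{\varphi=-\infty\}=\{w_{0}=-\infty\}\cap K=K$. For $i\ge1$ put
\[
f_{i}:=\max\bigl(\max(w/i,\,-1),\ i\psi_{0}\bigr).
\]
Then $f_{i}\in\mathcal E_m^0(\Omega)$. Indeed $f_{i}$ is $m$-subharmonic and bounded, because $-1\le\max(w/i,-1)\le f_{i}\le 0$; it tends to $0$ at every point of $\partial\Omega$, being squeezed between $i\psi_{0}$ and $0$; and it has finite total Hessian mass: $\max(w/i,-1)\in\mathcal F_m(\Omega)$ and, by the anti-monotonicity of the total mass on $\mathcal F_m$, $\int_{\Omega}H_m(\max(w/i,-1))\le\int_{\Omega}H_m(w/i)=i^{-m}\int_{\Omega}H_m(w)<\infty$, whence $f_{i}\in\mathcal F_m(\Omega)$ with $\int_{\Omega}H_m(f_{i})\le\int_{\Omega}H_m(\max(w/i,-1))<\infty$; a bounded function of $\mathcal F_m(\Omega)$ that vanishes on $\partial\Omega$ lies in $\mathcal E_m^0(\Omega)$.

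Next I would identify the limit. We have $-f_{i}=\min(-w/i,\,1,\,-i\psi_{0})\in[0,1]$, and $-f_{i}\to 1_{K}$ pointwise as $i\to\infty$: on $K=\{w=-\infty\}$ one has $-w/i=+\infty$ while $-i\psi_{0}\to+\infty$ (since $\psi_{0}$ is finite and negative there), so $-f_{i}=\min(1,-i\psi_{0})\to 1$; off $K$ one has $-w/i\to 0$, so for large $i$ the minimum equals $-w/i$ and tends to $0$. Applying the hypothesis to each $f_{i}\in\mathcal E_m^0(\Omega)$ gives $\int_{\Omega}(-f_{i})\,d\mu\ge\int_{\Omega}(-f_{i})\,d\nu$, and, since $0\le-f_{i}\le 1$ and $\mu,\nu$ are finite, dominated convergence lets us pass to the limit and conclude $\mu(K)\ge\nu(K)$.

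The parts I expect to carry the genuine content are the two structural inputs used above: that every $m$-polar set is contained in $\{w_{0}=-\infty\}$ for some $w_{0}\in\mathcal F_m(\Omega)$, and that $\mathcal F_m$ is preserved under passing to a larger negative $m$-subharmonic function, together with the accompanying anti-monotonicity of total Hessian mass. These are known but not formal; granting them, the construction above is a routine truncation-and-limit argument. (The term $i\psi_{0}$ is inserted only to force $f_{i}$ — otherwise merely bounded $m$-subharmonic — into $\mathcal E_m^0(\Omega)$; alternatively one could dispense with the global function $w_{0}$, use the inner regularity of $\mu$ and $\nu$ to reduce to compact subsets $L\subseteq K$, and build defining functions tailored to such $L$.)
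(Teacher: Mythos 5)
Your argument is correct and shares the paper's core mechanism: approximate $1_K$ by $\min(-\varepsilon g,1)$-type truncations of a defining function for $K$ and pass to the limit by dominated convergence against the finite measures. Where you genuinely diverge is in how you cope with the fact that the hypothesis is stated only for test functions in $\mathcal E_m^0(\Omega)$. The paper first invokes an approximation/density result (Theorem 1.7.1 in \cite{L2}) to upgrade the inequality $\int_\Omega -f\,d\mu\ge\int_\Omega -f\,d\nu$ to all $f\in\mathcal{SH}_m^-(\Omega)\cap L^\infty(\Omega)$, after which it can freely plug in $\max(\varepsilon g,-1)$ for the defining function $g$ of $K$, with no concern for boundary values or total Hessian mass. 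You instead keep the hypothesis as given and force the truncations into $\mathcal E_m^0(\Omega)$ by hand: replacing $\varphi$ by $w=\max(w_0,\varphi)$ with $w_0\in\mathcal F_m(\Omega)$ to control the total mass, and taking the maximum with $i\psi_0$ to impose zero boundary values. This buys independence from the extension lemma but at the price of two other nontrivial inputs you correctly flag -- that every $m$-polar set in an $m$-hyperconvex domain sits inside $\{w_0=-\infty\}$ for some $w_0\in\mathcal F_m(\Omega)$, and that $\mathcal F_m(\Omega)$ is stable under passing to larger negative $m$-sh functions with anti-monotone total Hessian mass. Both are established in the $m$-subharmonic Cegrell theory (\cite{L2}, \cite{HP}), so your proof is complete granted those citations; the paper's route is shorter because the single density theorem absorbs all of that work. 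One cosmetic remark: the intersection with $w_0$ and the $i\psi_0$ term are both only needed to certify membership in $\mathcal E_m^0(\Omega)$; they play no role in identifying the pointwise limit, which is $1_{\{w=-\infty\}}=1_K$ exactly as in the paper.
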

\begin{proof}
Using Theorem 1.7.1 in \cite{L2}, we get
$$\int_{\Omega}-f d\mu\geq\int_{\Omega}-f d\nu \  \forall f\in \mathcal{SH}_{m}^{-}(\Omega)\cap L^{\infty}(\Omega).$$
Take  $g \in \mathcal{SH}_{m}^{-}(\Omega)$  such that $K = \{g = -\infty\}$, then for all $\varepsilon > 0$, we have
$$\int_{\Omega}-\max(\varepsilon g,-1)  d\mu\geq\int_{\Omega}-\max(\varepsilon g,-1)  d\nu.$$
The result follows by letting $\varepsilon \to 0$.
\end{proof}

We consider the sets $\mathcal{P}_{m}(\Omega)$ and  $\mathcal{Q}_{m}(\Omega)$ defined as follows:
$$\mathcal{P}_{m}(\Omega)=\{f\in\mathcal E_m(\Omega)\ ;\exists\   P_1,..., P_n \ polar \ in \ \mathbb{C}\ /\ 1_{\{f=-\infty\}}H_m(f)(\Omega\backslash P_1\times...\times P_n)=0\}.$$
$$\mathcal{Q}_{m}(\Omega)=\{(f,g)\in (\mathcal E_m(\Omega))^2; \  \forall z\in \Omega , \exists V\ \in \mathcal{V}(z)\ and\  u_V\in \mathcal E^a_m(V)\ /\ f+u_V\leq g\ on\ V  \}.$$
We cite below some properties of the class $\mathcal{P}_{m}(\Omega)$ that will be useful further
\begin{proposition}\label{03}
 \begin{enumerate}
   \item If $f\in\mathcal{SH}_{m}^{-}(\Omega)$, $g\in\mathcal{P}_{m}(\Omega)$ and $f\geq g$ then $f\in\mathcal{P}_{m}(\Omega)$.
  \item If $f, g\in\mathcal{P}_{m}(\Omega)$  then $f+g\in\mathcal{P}_{m}(\Omega)$.
 \end{enumerate}
\end{proposition}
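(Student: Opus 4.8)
The plan for part (1) is short. First one checks that $f\in\mathcal E_m(\Omega)$: the class $\mathcal E_m(\Omega)$ is stable under replacing a function by a larger negative $m$-subharmonic one — given $U\Subset\Omega$ and $g_U\in\mathcal F_m(\Omega)$ with $g_U=g$ on $U$, the function $\max(g_U,f)$ lies in $\mathcal F_m(\Omega)$ (it dominates $g_U$, so the comparison principle bounds its Hessian mass) and equals $f$ on $U$; see \cite{L1}. Now $g\le f$ in $\mathcal E_m(\Omega)$, so Proposition \ref{01}(1) gives $1_{\{f=-\infty\}}H_m(f)\le 1_{\{g=-\infty\}}H_m(g)$. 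Since $g\in\mathcal P_m(\Omega)$ there are polar $P_1,\dots,P_n\subset\mathbb C$ with $1_{\{g=-\infty\}}H_m(g)$ carried by $P_1\times\cdots\times P_n$, hence so is $1_{\{f=-\infty\}}H_m(f)$, i.e.\ $f\in\mathcal P_m(\Omega)$.

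For part (2), $f+g\in\mathcal E_m(\Omega)$ because $\mathcal E_m(\Omega)$ is a convex cone (\cite{L1}), so $H_m(f+g)$ is defined. Fix polar $P_i$ for $f$ and $Q_i$ for $g$, put $P=P_1\times\cdots\times P_n$, $Q=Q_1\times\cdots\times Q_n$, and split $\{f+g=-\infty\}$ into the Borel pieces $\{f=-\infty,\,g>-\infty\}$, $\{g=-\infty,\,f>-\infty\}$ and $A:=\{f=-\infty\}\cap\{g=-\infty\}$. On the first piece I would exhaust $g$ by $g_j:=\max(g,-j)\in\mathcal E^a_m(\Omega)$ (bounded $m$-subharmonic functions charge no $m$-polar set): since $f+g_j=\max(f+g,\,f-j)$ coincides with $f+g$ on $\{g>-j\}$, locality of the Hessian operator gives $1_{\{g>-j\}}H_m(f+g)=1_{\{g>-j\}}H_m(f+g_j)$, while $\{f+g_j=-\infty\}=\{f=-\infty\}$, so Proposition \ref{01}(2) gives $1_{\{f=-\infty\}}H_m(f+g_j)\le 1_{\{f=-\infty\}}H_m(f)$, which is carried by $P$; letting $j\to\infty$ (monotone convergence, $1_{\{g>-j\}}\uparrow 1_{\{g>-\infty\}}$) shows $1_{\{f=-\infty,\,g>-\infty\}}H_m(f+g)$ is carried by $P$, and symmetrically the second piece is carried by $Q$. (For $m=1$ this already finishes, since $H_1$ is linear and $1_{\{\cdot>-\infty\}}H_m(\cdot)$ charges no $m$-polar set.)

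The piece $A$ is where the work lies for $m\ge2$. Here I would use the multilinear expansion, valid on $\mathcal E_m(\Omega)$,
\[
1_A H_m(f+g)=\sum_{k=0}^{m}\binom{m}{k}\,1_A\,(dd^cf)^k\wedge(dd^cg)^{m-k}\wedge\beta^{n-m},
\]
whose extreme terms $1_A H_m(f)$, $1_A H_m(g)$ are carried by $P$, $Q$ because $f,g\in\mathcal P_m(\Omega)$ and $A\subset\{f=-\infty\}\cap\{g=-\infty\}$. The mixed terms $1\le k\le m-1$ are \emph{not} dominated by the extreme ones (they can be nonzero while $H_m(f)=H_m(g)=0$, e.g.\ $f=\log|z_1|$, $g=\log|z_2|$), so the polar box for $f+g$ has to be manufactured from the coordinate structure of the pole sets rather than merely from $P_i,Q_i$. \emph{Showing that each $1_A(dd^cf)^k\wedge(dd^cg)^{m-k}\wedge\beta^{n-m}$ is carried by a polar box is the step I expect to be the real obstacle.} I would attempt it by induction on $k$: localise by replacing, on $\{g>-j\}$ (resp.\ $\{f>-j\}$), a factor $dd^cg$ (resp.\ $dd^cf$) by the bounded $\max(g,-j)$ (resp.\ $\max(f,-j)$), apply Proposition \ref{01}(2) together with the vanishing of mixed Hessian mass of bounded functions on $m$-polar sets, and pass to the limit; the coordinate-polar factors produced along the way, adjoined to $P_i$ and $Q_i$, give the $R_i$. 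Once $1_{\{f+g=-\infty\}}H_m(f+g)$ is carried by $R_1\times\cdots\times R_n$ with each $R_i$ polar, we conclude $f+g\in\mathcal P_m(\Omega)$; everything outside the $A$-step is routine bookkeeping.
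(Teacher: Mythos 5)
Part (1) is correct and is essentially the paper's argument: one first checks $f\in\mathcal E_m(\Omega)$ (your route via $\max(g_U,f)$ is in fact cleaner than what the paper writes, whose justification of this point is garbled) and then applies Proposition \ref{01}(1) to $g\le f$.

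Part (2), however, is not a proof: you explicitly leave open the only step that matters, namely that each mixed term $1_A(dd^cf)^k\wedge(dd^cg)^{m-k}\wedge\beta^{n-m}$ with $A=\{f=-\infty\}\cap\{g=-\infty\}$ and $1\le k\le m-1$ is carried by a polar box, and the induction you sketch does not actually produce the coordinate polar sets $R_i$ it would need. Worse, the example you yourself mention shows that no argument can close this step when $m<n$: with $f=\log|z_1|$, $g=\log|z_2|$ and $2=m<n$ one has $H_2(f)=H_2(g)=0$, so $f,g\in\mathcal P_2(\Omega)$, while $1_{\{f+g=-\infty\}}H_2(f+g)=2\,dd^cf\wedge dd^cg\wedge\beta^{n-2}$ is a positive multiple of the surface measure on $\{z_1=z_2=0\}$; since any product $P_1\times\cdots\times P_n$ with each $P_i$ polar in $\mathbb C$ meets the plane $\{z_1=z_2=0\}$ in a Lebesgue-null subset of $\mathbb C^{n-2}$, this measure has positive mass outside every polar box, i.e. $f+g\notin\mathcal P_2(\Omega)$. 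So what you flag as ``the real obstacle'' is not merely an obstacle to your strategy: it is an apparent counterexample to assertion (2) itself for $m<n$; only for $m=n$, where the mixed mass degenerates to a point mass at the origin, does the difficulty disappear.

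For comparison, the paper closes the gap by a different mechanism. Lemma 1 of \cite{Hai} reduces $1_{\{f+g=-\infty\}}H_m(f+g)$ directly to $\sum_k\binom{m}{k}1_{\{f=g=-\infty\}}(dd^cf)^k\wedge(dd^cg)^{m-k}\wedge\beta^{n-m}$, so your first two pieces carry no mass and need no separate treatment. It then invokes Lemma 5.6 of \cite{HP} to bound the mass of each such term on $\{f=g=-\infty\}\setminus(P_1\times\cdots\times P_n)$ from above by a product of powers of the masses of $H_m(f)$ and $H_m(g)$ on the same set, the second of which vanishes by the choice of the $P_i$ for $g$. That is precisely the domination of mixed terms by pure terms on an $m$-polar set which your example rules out; the analogous inequality known to hold on ($m$-)polar sets goes in the opposite direction. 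Before pursuing this proposition further you should check the exact statement of that lemma and, in all likelihood, restrict the claim to $m=n$.
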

\begin{proof}
(1) Since $f\in  \mathcal E_m(\Omega)$ so is $g$. Now assume that there exists $ P_1,..., P_n \ polar \ in \ \mathbb{C}$ such that $1_{\{g=-\infty\}}H_m(g)(\Omega\backslash P_1\times...\times P_n)=0$. Then by proposition \ref{01}, we deduce that
$$1_{\{f=-\infty\}}H_m(f)(\Omega\backslash P_1\times...\times P_n)=0.$$
It follows that $f\in\mathcal{P}_{m}(\Omega)$. The proof of the first assertion is completed.\\
(2) Using \cite{L2}, the set $\mathcal{E}_m(\Omega)$ is a convex cone. Hence if  $f,g\in  \mathcal E_m(\Omega)$ so is $f+g$.
Take $ P_1,..., P_n \ polar \ in \ \mathbb{C}$ such that $1_{\{g=-\infty\}}H_m(g)(\Omega\backslash P_1\times...\times P_n)=0$. We have
$$H_m(f+g)=\displaystyle\sum_{k=0}^{m}\binom{m}{k}(dd^c f)^k\wedge (dd^cg)^{m-k}\wedge \beta^{n-m}.$$
If we fix $k\in\{0,...,m\}$ then by lemma 1 in \cite{Hai} we obtain the following writing
$$(dd^c f)^k\wedge (dd^cg)^{m-k}\wedge \beta^{n-m}=\mu+\mathbf{1}_{\{f=g=-\infty\}}(dd^c f)^k\wedge (dd^cg)^{m-k}\wedge \beta^{n-m}$$
where $\mu$ is a measure that has no mass on $m-$polar sets. We deduce that
$$\begin{array}{lcl}
 \mathbf{1}_{\{f+g=-\infty\}}H_m(f+g) & = & \displaystyle\sum_{k=0}^{m}\binom{m}{k}\mathbf{1}_{\{f=g=-\infty\}}(dd^c f)^k\wedge (dd^cg)^{m-k}\wedge \beta^{n-m}.
 \end{array}$$
 It follows by Lemma 5.6 in \cite{HP} that
 $$\begin{array}{l}
  \displaystyle\int_{\Omega\setminus(P_1\times...\times P_n)}\mathbf{1}_{\{f+g=-\infty\}}H_m(f+g)\\
    =\displaystyle\sum_{k=0}^{m}\binom{m}{k}\displaystyle\int_{\Omega\setminus(P_1\times...\times P_n)}\mathbf{1}_{\{f=g=-\infty\}}(dd^c f)^k\wedge (dd^cg)^{m-k}\wedge \beta^{n-m} \\\
   \leq 2^m \left(\displaystyle\int_{\Omega\setminus(P_1\times...\times P_n)\cap{\{f=g=-\infty\}}}H_m(f)\right)^{\frac{1}{m}}.\left(\displaystyle\int_{\Omega\setminus(P_1\times...\times P_n)\cap{\{f=g=-\infty\}}}H_m(g)\right)^{\frac{1}{m}}\\
   =0.
\end{array}
$$
We conclude that $f+g\in\mathcal{P}_{m}(\Omega)$.
\end{proof}
   The following theorem represents the first main result in this paper.
\begin{theorem}\label{04}
  If $f_j$ is a sequence of $m-$subharmonic function that belong to  $\mathcal E_m(\Omega)$  and satisfies $f_j\rightarrow f\in\mathcal E_m(\Omega)$ in $Cap_{m}$-capacity. Then
 $$ 1_{\{f>-\infty\}}H_m(f)\leq \liminf_{j\to+\infty} H_m(f_j).$$
\end{theorem}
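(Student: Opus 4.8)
The plan is to reduce the global statement to a local one and then exploit the quasi-continuity of $m$-subharmonic functions together with the convergence in $Cap_m$-capacity. Since both sides of the inequality are measures and the statement $1_{\{f>-\infty\}}H_m(f)\leq\liminf_j H_m(f_j)$ should be read in the weak sense of measures, it suffices to prove that for every nonnegative $\varphi\in C_0^\infty(\Omega)$ one has
\[
\int_\Omega \varphi\, 1_{\{f>-\infty\}}H_m(f)\leq\liminf_{j\to+\infty}\int_\Omega\varphi\, H_m(f_j).
\]
Because the question is local (each point of $\Omega$ has a neighbourhood $U$ on which $f=f_U\in\mathcal F_m(\Omega)$ and, after passing to a suitable subsequence extracted via a diagonal argument, $f_j$ may likewise be replaced on $U$ by elements of $\mathcal F_m$), I would first set up the problem on such a relatively compact piece where all functions involved belong to $\mathcal F_m$ and are therefore dominated by convenient test functions from $\mathcal E_m^0$. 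This is where the hypothesis $f\in\mathcal E_m(\Omega)$ and $f_j\in\mathcal E_m(\Omega)$ gets used in full.

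The core of the argument is a truncation. For $k\in\mathbb N$ put $f^{(k)}:=\max(f,-k)$ and $f_j^{(k)}:=\max(f_j,-k)$; these are bounded $m$-subharmonic functions, so $H_m(f^{(k)})$ and $H_m(f_j^{(k)})$ are genuine measures obtained by the Bedford–Taylor construction. On the open set $\{f>-k\}$ one has $f^{(k)}=f$, hence $1_{\{f>-k\}}H_m(f^{(k)})=1_{\{f>-k\}}H_m(f)$, and letting $k\to\infty$ recovers $1_{\{f>-\infty\}}H_m(f)$ by monotone convergence on the increasing sequence of sets $\{f>-k\}$. For the truncated functions, convergence in $Cap_m$-capacity of $f_j$ to $f$ forces convergence in $Cap_m$-capacity of $f_j^{(k)}$ to $f^{(k)}$ (the truncation is $1$-Lipschitz, so $\{|f_j^{(k)}-f^{(k)}|>\varepsilon\}\subset\{|f_j-f|>\varepsilon\}$). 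Now I invoke the known continuity result for the Hessian operator on uniformly bounded families: if $g_j\to g$ in $Cap_m$-capacity with all $g_j,g$ taking values in $[-k,0]$ (locally), then $H_m(g_j)\to H_m(g)$ weakly; this is the $m$-subharmonic analogue of Xing-type convergence theorems and is available in \cite{L1}, \cite{HP}. Consequently, for fixed $k$ and nonnegative $\varphi\in C_0^\infty$ supported in $U$,
\[
\int_U\varphi\,1_{\{f>-k\}}H_m(f)=\int_U\varphi\,1_{\{f>-k\}}H_m(f^{(k)})\leq\int_U\varphi\,H_m(f^{(k)})=\lim_{j\to+\infty}\int_U\varphi\,H_m(f_j^{(k)}).
\]
The final step is to dominate $H_m(f_j^{(k)})$ by $H_m(f_j)$ on the relevant region. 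This is the standard comparison: on $\{f_j>-k\}$ one has $f_j^{(k)}=f_j$ so the Hessian measures agree there, while the mass that $H_m(f_j^{(k)})$ places on $\{f_j\leq -k\}$ is, by the maximality-type inequality for $\max$, controlled; more precisely $1_{\{f_j>-k\}}H_m(f_j^{(k)})=1_{\{f_j>-k\}}H_m(f_j)\leq H_m(f_j)$. To pass from $H_m(f_j^{(k)})$ to $H_m(f_j)$ against $\varphi\ge 0$ I would instead argue with the \emph{inner} region: fix a large $\ell>k$, note $1_{\{f>-k\}}\le 1_{\{f_j>-\ell\}}$ off a small capacity set for $j$ large, so up to an error tending to $0$,
\[
\int_U\varphi\,1_{\{f>-k\}}H_m(f^{(k)})\leq\liminf_{j}\int_U\varphi\,1_{\{f_j>-\ell\}}H_m(f_j^{(\ell)})=\liminf_j\int_U\varphi\,1_{\{f_j>-\ell\}}H_m(f_j)\leq\liminf_j\int_U\varphi\,H_m(f_j),
\]
and then let first $j\to\infty$, then $k\to\infty$.

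The main obstacle I anticipate is the bookkeeping in this last comparison step: one must control the Hessian mass of $f_j^{(k)}$ carried on the sublevel set $\{f_j\le -k\}$ uniformly in $j$, because a priori capacity convergence gives no handle on where that mass sits. The clean way around it is to exploit that the bad set $\{f_j\le-k\}$ lies, up to arbitrarily small $Cap_m$-capacity, inside $\{f\le -k+1\}$ once $j$ is large (this is exactly the content of $f_j\to f$ in capacity), and that $1_{\{f\le -k+1\}}$-mass of the weak limit $H_m(f^{(k)})$ is small for $k$ large since $H_m(f^{(k)})\to H_m(f)|_{\{f>-\infty\}}+(\text{something})$ and $\int_U H_m(f^{(k)})$ stays bounded on $\mathcal F_m$. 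Packaging these estimates — choosing $\varepsilon$, then $k$, then $j$ in the right order, and handling the capacity of the exceptional sets via the outer-capacity formula in the Remark — is the delicate part, but each ingredient is standard pluripotential theory once the truncation framework is in place.
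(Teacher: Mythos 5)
Your plan follows essentially the same route as the paper's proof: truncate at a level $-k$, use the localization identity $1_{\{u>-k\}}H_m(\max(u,-k))=1_{\{u>-k\}}H_m(u)$ together with the convergence theorem for truncated functions under $Cap_m$-convergence, and absorb the residual mass on $\{f_j\le -\ell\}$ by pushing it into $\{f<-\ell+2\}\cup\{|f_j-f|>1\}$ and estimating with $Cap_m(\{|f_j-f|>1\})$ and the relative extremal function $h_{\{f<-s\}\cap\Omega_1,\Omega}$ — exactly the decomposition $A_1+A_2+A_3$ in the paper. One correction to your last paragraph: the $H_m(f^{(k)})$-mass of $\{f\le -k+1\}$ is \emph{not} small for large $k$ in general, since it converges to the polar mass $\int_{\{f=-\infty\}}H_m(f)$, which may be positive; this is harmless only because the theorem's left-hand side carries the factor $1_{\{f>-\infty\}}$, and the argument must make that cancellation explicit (as the paper does by carrying the term $\int_\Omega 1_{\{f=-\infty\}}\varphi H_m(f)$ through the estimate of $A_1$ and invoking the decomposition of $H_m(f)$ into its polar and non-polar parts), rather than relying on the sublevel mass vanishing.
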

\begin{proof}
Take  $ 0\leq \varphi\in C_0^\infty(\Omega)$ and $\Omega_1\Subset\Omega$ such that  $supp f\Subset\Omega_1$. it suffices to show that
$$\liminf_{j\to+\infty}\int_{\Omega}  \varphi  H_m(f_j)\geq \int_{\Omega}1_{\{f>-\infty\}}\varphi H_m(f).$$
For each $a > 0$ one has that
$$\int_{\Omega}  \varphi   H_m(f_j)-\int_{\Omega}1_{\{f>-\infty\}}\varphi H_m(f)=A_1+A_2+A_3,$$
where
\begin{align*}
 A_1& = \int_{\Omega}  \varphi  \left(H_m(f_j)-H_m(\max(f_j,-a))\right)+\int_{\Omega}1_{\{f=-\infty\}}\varphi H_m(f)
\\ A_2& = \int_{\Omega}  \varphi  \left(H_m(\max(f_j,-a))-H_m(\max(f,-a))\right)
\\ A_3& = \int_{\Omega}  \varphi  \left(H_m(\max(f,-a))-H_m(f)\right).
\end{align*}
Using Theorem 3.6 in \cite{HP} we obtain that
\begin{align*}
 A_1& = \int_{\{f_j\leq-a\}}    \varphi(H_m(f_j)-H_m(\max(f_j,-a)))+\int_{\Omega}1_{\{f=-\infty\}}\varphi H_m(f)
\\ &\geq -\int_{\{f_j\leq-a\}}   \varphi  H_m(\max(f_j,-a))+\int_{\Omega}1_{\{f=-\infty\}}\varphi H_m(f)
\\& \geq -\int_{\{f_j\leq-a\}\cap\{|f_j-f|\leq1\}}   \varphi  H_m(\max(f_j,-a))-\int_{\{|f_j-f|>1\}}  \varphi  H_m(\max(f_j,-a))
\\&+\int_{\Omega}1_{\{f=-\infty\}}\varphi H_m(f)
\\& \geq -\int_{\{f<-a+2\}}  \varphi H_m(\max(f_j,-a))-a^nCap_{m}(\{|f_j-f|>1\}\cap\Omega_1)
\\&+\int_{\Omega}1_{\{f=-\infty\}}\varphi H_m(f)
\\& \geq \int_\Omega h_{\{f<-a+2\}\cap\Omega_1,\Omega}   \varphi H_m(\max(f_j,-a))-a^nCap_{m}(\{|f_j-f|>1\}\cap\Omega_1)
\\&+\int_{\Omega}1_{\{f=-\infty\}}\varphi H_m(f).
\end{align*}
If we let  $j\to +\infty$ then by Theorem 3.8 in \cite{HP} we obtain
$$ \liminf_{j\to+\infty}A_1\geq\int_\Omega h_{\{f<-a+2\}\cap\Omega_1,\Omega}   f H_m(\max(f_j,-a))+\int_{\Omega}1_{\{f=-\infty\}}fH_m(f).$$
It follows by Theorem 3.8 in \cite{HP} that for all $s>0$ one has
\begin{align*}
\liminf_{a\to+\infty}(\liminf_{j\to+\infty}A_1)& \geq\liminf_{a\to+\infty}\int_\Omega h_{\{f<-a+2\}\cap\Omega_1,\Omega}   \varphi H_m(\max(f_j,-a))+\int_{\Omega}1_{\{f=-\infty\}}\varphi H_m(f)
\\ & \geq\liminf_{a\to+\infty}\int_\Omega h_{\{f<-s\}\cap\Omega_1,\Omega}   \varphi H_m(\max(f_j,-a))+\int_{\Omega}1_{\{f=-\infty\}}\varphi H_m(f))
\\ & =\int_\Omega h_{\{f<-s\}\cap\Omega_1,\Omega}   \varphi H_m(f)+\int_{\Omega}1_{\{f=-\infty\}}\varphi H_m(f).
\end{align*}
Since $\displaystyle\lim_{s\rightarrow +\infty}Cap_m(\{f < -s\} \cap\Omega_1)=0$ then there exists a subset $A$ of $\Omega$ with $Cap_{m}(A) = 0$ such that the function  $ h_{\{f<-s\}\cap\Omega_1,\Omega}$ increases to $0$ as $s\rightarrow +\infty$ on $\Omega\backslash A$. Now  by a
decomposition theorem  in \cite{L2} we get that if  $s \to +\infty$

$$\liminf_{a\to+\infty}(\liminf_{j\to+\infty}A_1) \geq\int_\Omega -1_E \varphi H_m(f)+\int_{\Omega}1_{\{f=-\infty\}}\varphi H_m(f)\geq0.$$

It follows  by Theorem 3.8 in \cite{HP} that
\begin{align*}
\liminf_{j\to+\infty}&\left( \int_\Omega  \varphi H_m(f_j)-\int_{\Omega}1_{\{f>-\infty\}}\varphi H_m(f)\right)
\\&\geq\liminf_{a\to+\infty}\liminf_{j\to+\infty}A_1+\liminf_{a\to+\infty}A_3\geq0.
\end{align*}

\end{proof}
\begin{corollary}\label{05}
  Let $(f_j)_j\subset \mathcal E_m(\Omega)$ such that  $f_j\rightarrow f\in\mathcal E_m(\Omega)$ in $Cap_{m}$-capacity. If $(f_j,f)\in \mathcal Q_m(\Omega)$
  for all $j\geq1$. Then
 $$ H_m(f)\leq \liminf_{j\to+\infty} H_m(f_j).$$
\end{corollary}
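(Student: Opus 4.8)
The idea is to rerun the proof of Theorem \ref{04}, but this time subtracting the \emph{whole} Hessian measure $H_m(f)$ instead of only its regular part $\mathbf 1_{\{f>-\infty\}}H_m(f)$, and to use the hypothesis $(f_j,f)\in\mathcal Q_m(\Omega)$ exactly to recover the summand $\int_\Omega\mathbf 1_{\{f=-\infty\}}\varphi\,H_m(f)$ that was inserted by hand in the estimate for $A_1$ in that proof.

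\textbf{Step 1: a local comparison of the singular parts.} First I would prove that if $(f_j,f)\in\mathcal Q_m(\Omega)$ then, for every $j$,
$$\mathbf 1_{\{f=-\infty\}}H_m(f)\ \le\ \mathbf 1_{\{f_j=-\infty\}}H_m(f_j)$$
as measures on $\Omega$. Fix $z\in\Omega$; by definition of $\mathcal Q_m(\Omega)$ there are a neighbourhood $V$ of $z$, which we may take to be a ball (hence $m$-hyperconvex), and $u_V\in\mathcal E^a_m(V)$ with $f_j+u_V\le f$ on $V$. Since $f,f_j\in\mathcal E_m(\Omega)$ and, by \cite{L2}, membership in $\mathcal E_m$ is a local property, $f|_V,f_j|_V\in\mathcal E_m(V)$, and $f_j+u_V\in\mathcal E_m(V)$ because $\mathcal E_m(V)$ is a convex cone. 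Applying Proposition \ref{01}(1) to $f_j+u_V\le f$ on $V$ and then Proposition \ref{01}(2) to $f_j+u_V$ (with $u_V\in\mathcal E^a_m(V)$) gives, on $V$,
$$\mathbf 1_{\{f=-\infty\}}H_m(f)\ \le\ \mathbf 1_{\{f_j+u_V=-\infty\}}H_m(f_j+u_V)\ \le\ \mathbf 1_{\{f_j=-\infty\}}H_m(f_j).$$
Covering $\Omega$ by countably many such balls and writing $\Omega$ as a countable disjoint union of Borel sets each contained in one ball yields the inequality on all of $\Omega$.

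\textbf{Step 2: the modified decomposition.} Fix $0\le\varphi\in C_0^\infty(\Omega)$ and $\Omega_1\Subset\Omega$ containing the supports of $\varphi$ and $f$; it suffices to show $\liminf_{j\to+\infty}\int_\Omega\varphi\,H_m(f_j)\ge\int_\Omega\varphi\,H_m(f)$. For $a>0$ write
$$\int_\Omega\varphi\,H_m(f_j)-\int_\Omega\varphi\,H_m(f)=\widetilde A_1+A_2+A_3,$$
where $A_2$ and $A_3$ are exactly as in the proof of Theorem \ref{04} and
$$\widetilde A_1=\int_\Omega\varphi\big(H_m(f_j)-H_m(\max(f_j,-a))\big);$$
thus $\widetilde A_1$ is the quantity $A_1$ of that proof with the summand $\int_\Omega\mathbf 1_{\{f=-\infty\}}\varphi\,H_m(f)$ deleted. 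Since $H_m(f_j)-H_m(\max(f_j,-a))$ is carried by $\{f_j\le-a\}$ (Theorem 3.6 in \cite{HP}) and $\{f_j=-\infty\}\subset\{f_j\le-a\}$, Step 1 gives
$$\int_{\{f_j\le-a\}}\varphi\,H_m(f_j)\ \ge\ \int_\Omega\mathbf 1_{\{f_j=-\infty\}}\varphi\,H_m(f_j)\ \ge\ \int_\Omega\mathbf 1_{\{f=-\infty\}}\varphi\,H_m(f),$$
hence
$$\widetilde A_1\ \ge\ \int_\Omega\mathbf 1_{\{f=-\infty\}}\varphi\,H_m(f)-\int_{\{f_j\le-a\}}\varphi\,H_m(\max(f_j,-a)).$$
The right-hand side is precisely the lower bound for $A_1$ used in the proof of Theorem \ref{04}, so the remaining chain of estimates there applies verbatim and yields $\liminf_{a\to+\infty}\liminf_{j\to+\infty}\widetilde A_1\ge0$. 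Since $A_3$ does not depend on $j$ and $\liminf_{a\to+\infty}A_3\ge0$, and since for each fixed $a$ one has $A_2\to0$ as $j\to+\infty$ (because $\max(f_j,-a)\to\max(f,-a)$ in $Cap_m$ and these functions are uniformly bounded, so their Hessian measures converge weakly by Theorem 3.8 in \cite{HP}), we conclude $\liminf_{j\to+\infty}\int_\Omega\varphi\,H_m(f_j)\ge\int_\Omega\varphi\,H_m(f)$, which is the assertion.

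\textbf{Main obstacle.} The only genuinely new ingredient compared with Theorem \ref{04} is the local comparison of Step 1; everything else is a term-by-term rerun of that argument. The delicate point is Step 1: one must make sure that $f|_V$, $f_j|_V$, $u_V$ and $f_j+u_V$ all lie in the relevant Cegrell classes on the ball $V$ so that Proposition \ref{01} can be invoked there, and that the passage from the local inequality of measures to the global one is legitimate. I do not anticipate difficulty with the $\liminf$ bookkeeping in $a$ and $j$, since the decomposition mirrors that of Theorem \ref{04} exactly.
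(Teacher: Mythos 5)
Your proof is correct and follows essentially the same route as the paper: the paper's (two-line) argument likewise derives $1_{\{f=-\infty\}}H_m(f)\le 1_{\{f_j=-\infty\}}H_m(f_j)\le H_m(f_j)$ from the definition of $\mathcal Q_m(\Omega)$ together with Proposition \ref{01}, and then invokes Theorem \ref{04}. Your Step 2, which reruns the proof of Theorem \ref{04} with the singular term $\int_\Omega 1_{\{f=-\infty\}}\varphi H_m(f)$ now supplied by this comparison rather than inserted by hand, usefully makes explicit the step the paper leaves implicit, since the two measure inequalities cannot simply be added after passing to the $\liminf$.
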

\begin{proof}
  By combining the Definition of $\mathcal{Q}_m(\Omega)$ and the proposition \ref{01} we get that
$$ 1_{\{f=-\infty\}}H_m(f)\leq 1_{\{f_j=-\infty\}}H_m(f_j)\leq H_m(f_j).$$
The result follows using Theorem \ref{04}.
\end{proof}
\begin{corollary}\label{06}
  Let $(f_j)_j\subset \mathcal F_m(\Omega)$ such that $f_j\rightarrow f\in\mathcal F_m(\Omega)$ in $Cap_{m}$-capacity. If $(f_j,f)\in \mathcal Q_m(\Omega)$
  for all $j\geq1$. and
 $$ \lim_{j\to+\infty}\int_\Omega H_m(f_j)=\int_\Omega H_m(f).$$
 Then $ H_m(f_j)\to H_m(f)$ weakly as $j \to+\infty$.
\end{corollary}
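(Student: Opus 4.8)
The plan is to extract the weak convergence from the one‑sided bound $H_m(f)\le\liminf_j H_m(f_j)$ supplied by Corollary~\ref{05} together with the hypothesis that the total masses converge. Since $\mathcal F_m(\Omega)\subset\mathcal E_m(\Omega)$ and $(f_j,f)\in\mathcal Q_m(\Omega)$ for all $j$, Corollary~\ref{05} applies, so that for every $0\le\varphi\in C_0^\infty(\Omega)$ one has $\liminf_{j\to+\infty}\int_\Omega\varphi\,H_m(f_j)\ge\int_\Omega\varphi\,H_m(f)$. Moreover $f\in\mathcal F_m(\Omega)$ forces $H_m(f)$ to be a finite measure, and by assumption $H_m(f_j)(\Omega)\to H_m(f)(\Omega)<+\infty$; in particular the masses $H_m(f_j)(\Omega)$ are uniformly bounded. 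Thus the whole issue is to complement the above $\liminf$ inequality with the reverse $\limsup$ estimate.

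The key step is to turn the mass information into the upper bound $\limsup_j\int_\Omega\varphi\,H_m(f_j)\le\int_\Omega\varphi\,H_m(f)$ for $\varphi\in C_0^\infty(\Omega)$ with $0\le\varphi\le1$. Fix $\psi\in C_0^\infty(\Omega)$ with $0\le\psi\le1$ and $\psi\equiv 1$ on a neighbourhood of $\operatorname{supp}\varphi$, so that $\psi-\varphi$ is a nonnegative element of $C_0^\infty(\Omega)$. From $1=(1-\psi)+(\psi-\varphi)+\varphi$ and $1-\psi\ge0$ we get
$$\int_\Omega\varphi\,H_m(f_j)\ \le\ H_m(f_j)(\Omega)-\int_\Omega(\psi-\varphi)\,H_m(f_j).$$
Taking $\limsup_j$, using $H_m(f_j)(\Omega)\to H_m(f)(\Omega)$ and applying the $\liminf$ inequality of Corollary~\ref{05} to the nonnegative test function $\psi-\varphi$, we obtain
$$\limsup_{j\to+\infty}\int_\Omega\varphi\,H_m(f_j)\ \le\ H_m(f)(\Omega)-\int_\Omega(\psi-\varphi)\,H_m(f)\ =\ \int_\Omega\varphi\,H_m(f)+\int_\Omega(1-\psi)\,H_m(f).$$
Now I would let $\psi$ range over an exhausting sequence $\psi_k\nearrow 1$, each $\psi_k\in C_0^\infty(\Omega)$ still equal to $1$ near $\operatorname{supp}\varphi$; since $H_m(f)$ is finite, monotone convergence gives $\int_\Omega(1-\psi_k)\,H_m(f)\to 0$, and the clean upper bound follows. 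Combined with the lower bound this yields $\int_\Omega\varphi\,H_m(f_j)\to\int_\Omega\varphi\,H_m(f)$ for every $0\le\varphi\in C_0^\infty(\Omega)$ (the normalization $\varphi\le1$ is harmless by homogeneity).

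It remains to remove the sign restriction. Given an arbitrary $\varphi\in C_0^\infty(\Omega)$, choose $\psi\in C_0^\infty(\Omega)$ with $0\le\psi\le1$ and $\psi\equiv1$ on $\operatorname{supp}\varphi$, and a constant $C\ge\sup_\Omega|\varphi|$; then $\varphi+C\psi$ and $C\psi$ are both nonnegative elements of $C_0^\infty(\Omega)$, so applying the convergence already established to each of them and subtracting gives $\int_\Omega\varphi\,H_m(f_j)\to\int_\Omega\varphi\,H_m(f)$, that is, $H_m(f_j)\to H_m(f)$ weakly. The only genuine obstacle is the $\limsup$ estimate: Corollary~\ref{05} is inherently one‑sided, so the reverse inequality must be produced from the global mass convergence, and the mechanism is exactly that testing against $\psi-\varphi$ converts a lower bound on the complement of $\operatorname{supp}\varphi$ into an upper bound near $\operatorname{supp}\varphi$, while the exhaustion $\psi_k\nearrow1$ disposes of the residual term $\int_\Omega(1-\psi_k)\,H_m(f)$ by finiteness of $H_m(f)$. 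Everything else is routine.
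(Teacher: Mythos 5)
Your proof is correct and rests on the same two ingredients as the paper's: the lower bound from Corollary \ref{05} and the convergence of total masses. The paper packages the squeeze more abstractly --- it extracts a weak-$*$ cluster point $\mu$ of $(H_m(f_j))_j$, gets $\mu\geq H_m(f)$ from Corollary \ref{05} and $\mu(\Omega)\leq\lim_j\int_\Omega H_m(f_j)=\int_\Omega H_m(f)$ from the portmanteau inequality, and concludes $\mu=H_m(f)$ --- whereas your cutoff decomposition $1=(1-\psi)+(\psi-\varphi)+\varphi$ just re-derives by hand the elementary fact that two finite measures with $\mu\geq\nu$ and $\mu(\Omega)\leq\nu(\Omega)$ must coincide; the substance is identical.
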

\begin{proof}
  Without loss of generality one can assume that $H_m(f_j)\to \mu$ weakly as $j \to+\infty$. Using Corollary \ref{05} we obtain that $\mu \geq H_m(f)$. On the other hand,
   $$ \mu(\Omega)\leq\liminf_{j\to+\infty}\int_\Omega H_m(f_j)=\int_\Omega H_m(f).$$
Hence $\mu=H_m(f)$.
\end{proof}

\begin{theorem}\label{07}
  Let $f_j, g\in\mathcal E_m(\Omega)$, $f \in\mathcal P_m(\Omega)$, and $D\Subset\Omega$. Assume that
   \begin{itemize}
     \item $f_j\rightarrow f$ in $Cap_{m}$-capacity.
     \item For all $j\geq1$, $f_j\geq g$ on $\Omega\backslash D$.
   \end{itemize}

 Then
 $H_m(f_j)\rightarrow H_m(f)$ weakly as $j\rightarrow\infty.$
\end{theorem}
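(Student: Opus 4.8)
The plan is the following. Since $H_m(f_j)$ and $H_m(f)$ are positive Radon measures on $\Omega$, and the hypothesis $f_j\geq g$ on $\Omega\setminus D$ (together with $f_j\leq0$) provides, through the comparison principle and Chern--Levine--Nirenberg type estimates for $m$-sh functions (see \cite{L1,HP}), both a uniform local bound $\sup_j\int_K H_m(f_j)<\infty$ for every $K\Subset\Omega$ and a uniform smallness of $\int_{\Omega\setminus K}H_m(f_j)$ as $K$ exhausts $\Omega$, the family $\{H_m(f_j)\}_j$ is relatively compact for vague convergence and tight. As vague convergence of such measures is metrizable, it suffices to prove that whenever $H_m(f_j)\rightharpoonup\mu$ along a subsequence one has $\mu=H_m(f)$; note that tightness moreover forces $\mu(\Omega)=\lim_j\int_\Omega H_m(f_j)$.

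\textbf{Lower bound $\mu\geq H_m(f)$.} Split $H_m(f)=\mathbf{1}_{\{f>-\infty\}}H_m(f)+\mathbf{1}_{\{f=-\infty\}}H_m(f)$. For the first summand, Theorem \ref{04} gives $\mathbf{1}_{\{f>-\infty\}}H_m(f)\leq\liminf_j H_m(f_j)$; testing against an arbitrary $0\leq\varphi\in C^\infty_0(\Omega)$ and letting $j\to\infty$ yields $\mathbf{1}_{\{f>-\infty\}}H_m(f)\leq\mu$. For the second summand I would use that $f\in\mathcal{P}_m(\Omega)$: choose polar sets $P_1,\dots,P_n\subset\mathbb{C}$ with $\mathbf{1}_{\{f=-\infty\}}H_m(f)$ carried by $P:=P_1\times\cdots\times P_n$, pick subharmonic functions $w_i\leq0$ on the $i$-th coordinate projection of $\Omega$ with $w_i\equiv-\infty$ on $P_i$, and set $\psi(z):=\sum_{i=1}^{n}w_i(z_i)$, so that $\psi\in\mathcal{E}_m(\Omega)$ (the class being a convex cone), $\psi\leq0$, and $\psi\equiv-\infty$ on $P$. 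One then reruns the truncation argument from the proof of Theorem \ref{04} applied to $\max(f_j,-a)$, but now carrying the auxiliary function $\psi$ alongside the truncations: since the singular part of $H_m(f)$ is concentrated on $\{\psi=-\infty\}$, the term over $\{f=-\infty\}$ that was discarded there can be recovered with the help of the comparison inequalities of Proposition \ref{01} (applied to $f_j+\psi$ and $f+\psi$), which produces $\mathbf{1}_{\{f=-\infty\}}H_m(f)\leq\liminf_j H_m(f_j)\leq\mu$. Adding the two inequalities gives $\mu\geq H_m(f)$.

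\textbf{Upper bound and conclusion.} It remains to see that $\mu$ carries no extra mass. Using $f_j\geq g$ on $\Omega\setminus D$ one controls $H_m(f_j)$ near $\partial\Omega$ uniformly and, by the comparison principle --- after localising to $\Omega_\delta:=\{\rho<-\delta\}\Subset\Omega$ with $D\Subset\Omega_\delta$, so that only finite Hessian masses occur and the monotonicity $\int H_m(u)\leq\int H_m(v)$ for $v\leq u$ in $\mathcal{F}_m$ is available --- one obtains $\limsup_j\int_{\Omega_\delta}H_m(f_j)\leq\int_{\overline{\Omega_\delta}}H_m(f)$. Choosing $\chi\in C^\infty_0(\Omega)$ with $\mathbf{1}_{\overline{\Omega_\delta}}\leq\chi$ supported slightly beyond $\Omega_\delta$, the quantity $\mu(\Omega_\delta)\leq\int\chi\,d\mu=\lim_j\int\chi\,H_m(f_j)$ is then bounded, via the previous estimate, by $H_m(f)$ of a slightly larger set; letting $\delta\downarrow0$ gives $\mu(B)\leq H_m(f)(B)$ for Borel $B\Subset\Omega$, which together with $\mu\geq H_m(f)$ forces $\mu=H_m(f)$. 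Hence the whole sequence $H_m(f_j)$ converges weakly to $H_m(f)$.

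\textbf{Main obstacle.} The heart of the matter is the lower bound for the singular part $\mathbf{1}_{\{f=-\infty\}}H_m(f)$: this is exactly where the assumption $f\in\mathcal{P}_m(\Omega)$ enters, and the technical core is to re-run the $A_1$-estimate in the proof of Theorem \ref{04} so that the mass carried by the product of polar sets $P$ is not lost in the limit, which requires threading the auxiliary function $\psi$ through the truncation argument together with Proposition \ref{01}. A secondary difficulty is establishing the tightness and the boundary mass control coming from $f_j\geq g$ on $\Omega\setminus D$, particularly when $H_m(f)(\Omega)=+\infty$, which is the reason for the localisation to $\Omega_\delta$.
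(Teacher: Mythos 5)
Your overall architecture (pass to a weak-$*$ limit $\mu$ of a subsequence, prove $\mu\geq H_m(f)$ by treating $\mathbf{1}_{\{f>-\infty\}}H_m(f)$ via Theorem \ref{04} and the singular part via $f\in\mathcal P_m(\Omega)$, then cap the total mass on relatively compact subsets) matches the paper's. But the decisive step, $\mu\geq \mathbf{1}_{\{f=-\infty\}}H_m(f)$, is exactly the point you leave as a plan rather than a proof, and the mechanism you propose for it does not work as stated. Proposition \ref{01}(2) requires one of the two functions to lie in $\mathcal E_m^a$; your auxiliary $\psi(z)=\sum_i w_i(z_i)$ is $-\infty$ precisely on $P_1\times\cdots\times P_n$, and for $m<n$ the mixed terms $dd^cw_{i_1}\wedge\cdots\wedge dd^cw_{i_m}\wedge\beta^{n-m}$ generically charge that product, so $\psi\notin\mathcal E_m^a$; moreover the inequality in Proposition \ref{01}(2) bounds the singular part of $H_m(f+\psi)$ from \emph{above} by that of $H_m(f)$, which is the wrong direction for recovering mass on $P_1\times\cdots\times P_n$ in the limit. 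It is also unclear that $f_j+\psi$ and $f+\psi$ belong to $\mathcal E_m(\Omega)$, or that they converge in $Cap_m$-capacity in a way that lets you ``rerun'' the $A_1$ estimate. So the heart of the theorem is missing.

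The paper closes this gap by a different device: set $\tilde f_j=\max(f_j,g)$ and $\tilde f=\max(f,g)$, so that $H_m(\tilde f_j)\to H_m(\tilde f)$ weakly (Theorem 3.8 in \cite{HP}, applicable because the $\tilde f_j$ are uniformly minorized by $g$) while $\tilde f_j=f_j$ and $\tilde f=f$ off $D$. Testing against $u\in\mathcal E_m^0(\Omega_1)$ and integrating by parts gives $\int -u\,d\mu\geq\int -u\,H_m(\tilde f)$ for all such $u$, and Proposition \ref{02} converts this family of integral inequalities into the mass inequality $\mu(K)\geq H_m(\tilde f)(K)$ on complete $m$-polar sets $K$, in particular on the set $P_1\times\cdots\times P_n$ which carries all of $\mathbf{1}_{\{f=-\infty\}}H_m(f)$ because $f\in\mathcal P_m(\Omega)$. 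That is where the hypotheses $f\in\mathcal P_m(\Omega)$ and $f_j\geq g$ actually interact; your sketch never forms $\max(f_j,g)$ and therefore has no weakly convergent comparison sequence against which to test $\mu$. Your upper-bound paragraph has the same omission: the uniform bound $\limsup_j\int_{\Omega_1}H_m(f_j)\leq\int_{\overline{\Omega}_1}H_m(\tilde f)<\infty$ comes precisely from replacing $f_j$ by $\tilde f_j$ on $\Omega_1\Supset D$ and applying Stokes, not from a generic Chern--Levine--Nirenberg estimate. (Minor point: tightness is not needed here, since weak convergence is tested against $C_0^\infty(\Omega)$ and local mass bounds suffice.)
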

\begin{proof}
As $f \in\mathcal P_m(\Omega)$ there exist $P_1,..., P_n$ be $m-$polar subsets in $\mathbb{C}$ such that
$$1_{\{f=-\infty\}}H_m(f)(\Omega\backslash P_1\times...\times P_n)=0.$$
Take
$$\tilde{f}_j=\max(f_j, g), \ \ \   \tilde{f}=\max(f, g)$$
It easy to check that  $\tilde{f}_j, f \in \mathcal{E}_m(\Omega)$ and $\tilde{f}_j\rightarrow \tilde{f}$ in $Cap_{m}$-capacity. Moreover $\tilde{f}_j |_{\Omega\backslash D} = f_j|_{\Omega\backslash D}$ and  $\tilde{f}|_{\Omega\backslash D}=f|_{\Omega\backslash D}$. Using Theorem 3.8 in \cite{HP},  we get that  $H_m(\tilde{f_j})\rightarrow H_m(\tilde{f})$ weakly as $j\rightarrow\infty$. Let $\Omega_1$ be a $m-$hyperconvex domain such that $D \Subset\Omega_1 \Subset \Omega$. By Stokes’ theorem we have
$$\limsup_{j\to+\infty} \int_{\Omega_1}   H_m(f_j)=\limsup_{j\to+\infty} \int_{\Omega_1}   H_m(\tilde{f}_j)\leq\int_{\bar{\Omega}_1}   H_m(\tilde{f})<\infty.$$
Hence without loss of generality one may assume that there exists a positive measure $\mu$ such that $ H_m(f_j)\rightarrow \mu$ weakly as $j\rightarrow\infty.$ The proof will be completed if we show that $\mu = H_m(f)$ on $\Omega_1$. For this take $u\in\mathcal E_m^0(\Omega_1)$, then by Stokes’ theorem we obtain that

$$\int_{\Omega_1}   -u d\mu=\lim_{j\to+\infty} \int_{\Omega_1}  -u  H_m(f_j)\geq\lim_{j\to+\infty} \int_{\Omega_1}  -u  H_m(\tilde{f}_j)\geq\lim_{j\to+\infty} \int_{\Omega_1}  -u  H_m(\tilde{f}).$$
Moreover by Proposition \ref{02} and \cite{Hi} we get
$$ H_m(f)(K)\leq \mu(K).\qquad (*)$$
for all compact subsets $K$ of $E_1,..., E_n$. We deduce that  $\mu \geq 1_{\{f=-\infty\}}H_m(f)$. So by Theorem \ref{04} we obtain
$$H_m(f)\leq \mu \ on \ \Omega_1.$$
Now let $\Omega_2$ be a domain satisfying $D \Subset\Omega_2 \Subset \Omega_1$. By Stokes theorem we obtain that

\begin{align*}
\mu(\Omega_2)& \leq \liminf_{j\to+\infty} \int_{\Omega_2}   H_m(f_j)=\liminf_{j\to+\infty} \int_{\Omega_2}   H_m(\tilde{f}_j)
\\ &\leq  \int_{\bar{\Omega}_2}   H_m(\tilde{f})\leq \int_{\Omega_1}   H_m(\tilde{f})=\int_{\Omega_1}   H_m(f).
\end{align*}
It follows that
$$\mu(\Omega_1)\leq H_m(f)(\Omega_1).\qquad (**)$$
Using $(*)$ and $(**)$ we deduce that  $\mu= H_m(f)$
 on $\Omega_1$.
\end{proof}
The following lemma will be useful in the proof of several results in this paper.
\begin{lemma}\label{08}
Fix $f \in {\mathcal F}_m(\Omega)$. Then  for all $s>0$ and $ t > 0$, one has

\begin{equation}
 t^m
Cap_m(f\ <-s-t ) \leq \int_{\{f< -s\}} H_m(f) \leq s^m
Cap_m(f < -s ).
\end{equation}
\end{lemma}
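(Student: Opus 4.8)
The plan is to prove the two inequalities independently. Both rely on three standard ingredients: the comparison principle for ${\mathcal F}_m(\Omega)$ (if $u,v\in{\mathcal F}_m(\Omega)$, then $\int_{\{u<v\}}H_m(v)\le\int_{\{u<v\}}H_m(u)$, see \cite{L2,HP}); the conservation of the total Hessian mass under truncation, $\int_\Omega H_m(\max(f,c))=\int_\Omega H_m(f)$ for $c<0$ (Stokes' theorem when $f\in{\mathcal E}^0_m(\Omega)$, and the general case by a decreasing ${\mathcal E}^0_m(\Omega)$-exhaustion together with continuity of the total mass); and the fact that $Cap_m(E)=\sup\{\int_E H_m(w):w\in{\mathcal SH}_m(\Omega),\,-1\le w\le 0\}$ is monotone in $E$. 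Throughout, $\int_\Omega H_m(f)<\infty$ since $f\in{\mathcal F}_m(\Omega)$.

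For the left inequality, fix a competitor $w$ with $-1\le w\le 0$ and set $\varphi:=\max\bigl(f,\,tw-s\bigr)$; then $\varphi\in{\mathcal SH}_m(\Omega)$ and $f\le\varphi\le 0$, so $\varphi\in{\mathcal F}_m(\Omega)$. Since $w\ge-1$ we have $tw-s\ge-(s+t)$, so $f<tw-s$ on the open set $\{f<-s-t\}$, whence $\varphi\equiv tw-s$ there and $H_m(\varphi)=t^{m}H_m(w)$ on $\{f<-s-t\}$. Also $\{f<\varphi\}=\{f<tw-s\}\subseteq\{f<-s\}$ because $tw-s\le-s$. The comparison principle applied to $f$ and $\varphi$ then gives
\[
t^{m}\int_{\{f<-s-t\}}H_m(w)=\int_{\{f<-s-t\}}H_m(\varphi)\le\int_{\{f<\varphi\}}H_m(\varphi)\le\int_{\{f<\varphi\}}H_m(f)\le\int_{\{f<-s\}}H_m(f),
\]
and taking the supremum over $w$ yields $t^{m}Cap_m(\{f<-s-t\})\le\int_{\{f<-s\}}H_m(f)$.

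For the right inequality, fix $s'>s$ and put $v:=\max(f/s',-1)\in{\mathcal SH}_m(\Omega)$, so $-1\le v\le 0$ and $s'v=\max(f,-s')$. On the open set $\{f<-s'\}$ one has $v\equiv-1$, hence $H_m(v)=0$ there, while on $\{f>-s'\}$ one has $s'v=f$. Conservation of total mass, $\int_\Omega H_m(s'v)=\int_\Omega H_m(f)$, then forces the mass of $H_m(s'v)$ on the level set $\{f=-s'\}$ to equal $\int_{\{f\le-s'\}}H_m(f)$, so that $\int_{\{f\le-s'\}}H_m(v)=(s')^{-m}\int_{\{f\le-s'\}}H_m(f)$. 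Since $s'>s$ we have $\{f\le-s'\}\subseteq\{f<-s\}$, so by the definition and monotonicity of $Cap_m$,
\[
\int_{\{f\le-s'\}}H_m(f)=(s')^{m}\int_{\{f\le-s'\}}H_m(v)\le(s')^{m}Cap_m(\{f\le-s'\})\le(s')^{m}Cap_m(\{f<-s\}).
\]
Letting $s'\searrow s$ and using $\{f\le-s'\}\nearrow\{f<-s\}$, monotone convergence for the finite measure $H_m(f)$ gives $\int_{\{f<-s\}}H_m(f)\le s^{m}Cap_m(\{f<-s\})$.

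I expect the main difficulty to lie in the bookkeeping rather than in any single estimate. Two points need care: first, the comparison principle and the mass-conservation identity are immediate for $f\in{\mathcal E}^0_m(\Omega)$ (truncations then coincide with $f$ near $\partial\Omega$) and must be transferred to a general $f\in{\mathcal F}_m(\Omega)$ via a decreasing ${\mathcal E}^0_m(\Omega)$-approximation, using continuity of the total Hessian mass along such sequences; second, and more delicate, the argument for the right inequality only produces the \emph{closed} sublevel set $\{f\le-s'\}$ with $s'>s$, so the open set $\{f<-s\}$ is recovered only in the limit $s'\searrow s$ — here it is essential that $Cap_m(\{f<-s\})$ on the right-hand side is kept fixed while $s'$ decreases.
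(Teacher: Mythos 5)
Your proof is correct and follows essentially the same strategy as the paper's: the left inequality via the comparison principle applied to $f$ and a scaled bounded competitor (the paper takes the extremal functions $h_K^*$ of compacts $K\subset\{f<-s-t\}$ where you take an arbitrary $w\in\mathcal{SH}_m(\Omega)$ with $-1\le w\le 0$, which amounts to the same supremum), and the right inequality via truncation at level $-s$, locality of $H_m$ on $\{f>-s\}$, and conservation of the total mass. Your additional limiting step $s'\searrow s$ is in fact a small improvement in rigor: the paper's computation literally yields $\int_{\{f\le -s\}}H_m(f)\le s^m\,Cap_m(\{f\le -s\})$ and silently identifies closed with open sublevel sets, whereas your argument recovers the stated open-set version cleanly.
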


 \begin{proof} Let $t, \ s>0$ and $K$ be a compact subset satisfying
  $K \subset \{ f < -s -t \} $. We have
\begin{multline*}
Cap_m (K) = \int _\Omega  H_m(h_K^*) =
 \int _{ \{ f  < -s -t \}}
 H_m( h_K^* ) \\
= \int _{\{ f  < -s + th_K^*  \}}
H_m(h_K^*) = \frac{1}{t^m}\int _
{ \{ f  < g \}}
 H_m( g),
\end{multline*}
Using Theorem 3.6 in \cite{HP} we obtain that
\begin{multline*}
\frac{1}{t^m} \int _{ \{ f  < g \}}
 H_m(g) = \frac{1}{t^m}\int_
{ \{ f  < \max( f , g) \}}
 H_m(\max ( f , g))
  \leq\\
 \frac{1}{t^m}\int _{ \{ f  < \max ( f,  g) \}}
 H_m(f ) =    \frac{1}{t^m}\int _{
 \{ f  <  -s  + t h_K  \}}
 H_m(f)  \leq  \frac{1}{t^m}\int _{
 \{ f  <  -s   \}}
 H_m( f).
\end{multline*}
The left hand inequality of $(3.1)$ follows by taking the supremum over all compact sets $K\subset \Omega$.\\
 For the right hand inequality, we have
\begin{multline*}\int _{
 \{ f  \leq   -s   \}}
 H_m( f)  = \int _{\Omega}
H_m( f)   - \int _{f>-s }
H_m( f)  \\
 = \int _{\Omega}
 H_m(\max( f , -s )) - \int _{f>-s }
H_m(\max( f , -s ) )\\ = \int _{f \leq -s }
H_m( \max( f , -s )) \leq s^m Cap_m \{f \leq -s \}.
\end{multline*}
The result follows.
\end{proof}
\begin{remark}
  Using the previous lemma we deduce the following results
  \begin{enumerate}
    \item $f\in {\mathcal F}_m(\Omega)$ if and only if
$\displaystyle\limsup_{s\to 0} s^m Cap_m (\{f < -s\})<+\infty.$
    \item If $f\in {\mathcal F}_m(\Omega)$ then
$$
 \int_\Omega H_m(f ) = \lim _{s\to 0} s^m Cap_m (\{f < -s\})
 $$
and
 $$
 \int_{\{f=-\infty\}}H_m( f ) = \lim _{s\to +\infty } s^mCap_m (\{f <-s\}).
 $$
\item The function $f\in {\mathcal F}_m^a(\Omega) $ if and only if $\displaystyle\lim _{s\to +\infty } s^n Cap_m (\{f < -s\})=0.$ Indeed it is known  that if $f$ is an  m$-$sh function on $\Omega$ then $H_m(f)(P)=0$ for every m$-$polar set $P\subset \Omega$ if and only if $H_m(f)(\{f=-\infty\})=0$ which follows directly from the previous assertion of this remark.
 \end{enumerate}

\end{remark}
\section{The Class $\mathcal{E}_{m,\chi}(\Omega)$ }
Throughout this section   $\chi : \mathbb{R}^-  \to \mathbb{R}^- $ will be an increasing function. In \cite{Hung} Hung introduced the class $\mathcal{E}_{m,\chi}(\Omega)$ to  generalize the fundamental weighted energy classes introduced firstly by Benelkourchi,  Guedj, and  Zeriahi \cite{C-Z-Bel}. Such class is defined as follows:
\begin{definition}
  We say that $f\in \mathcal{E}_{m,\chi}(\Omega)$ if and only if there exits  $(f_j)_j\subset \mathcal{E}^0_{m}(\Omega)$ such that $f_j\searrow f$ in $\Omega$ and
  $$\sup_{j\in \mathbb{N}}\int_{\Omega}(-\chi(f_j))H_m(f_j)<+\infty.$$
\end{definition}
\begin{remark}
 It is clear that the class $\mathcal{E}_{m,\chi}(\Omega)$ generalizes all analogous Cegrell classes defined by Lu in \cite{L1} and \cite{L2}. Indeed
\begin{enumerate}
\item ${\mathcal E}_{m,\chi} (\Omega )={\mathcal F}_m(\Omega)$ when $\chi(0)
 \neq 0$ and $\chi$ is bounded.
\item ${\mathcal E}_{m,\chi} (\Omega )={\mathcal E}_m^p(\Omega)$ in the case when $\chi(t)=-(-t)^p$;
\item ${\mathcal E}_{m,\chi} (\Omega)={\mathcal F}_m^p(\Omega)$ in the case when  $\chi(t)=-1-(-t)^p$.

\end{enumerate}
Note that if we take $m=n$ in all the previous cases we recover the classic Cegrell classes defined in \cite{Ce1} and \cite{Ce2}.
\end{remark}
Note that in the case $\chi (0) \neq 0 $ one has that ${\mathcal E}_{m,\chi} (\Omega)\subset{\mathcal F}_m(\Omega)$ so the Hessian operator is well defined in and is with finite total mass on $\Omega$. So in the rest of this paper we will always consider the case $\chi (0)=0 $.\\

In the following Theorem we will prove that the Hessian operator is well defined on $\mathcal E_{m,\chi}(\Omega)$. Note that this result was proved in \cite{Hung} but with an extra condition ($\chi(2t)\leq a.\chi(t)$). Here we omit that condition and the proof of such  result is completely different.
\begin{theorem}\label{10}
Assume that $\chi\not\equiv 0$.
Then
$$\mathcal E_{m,\chi}(\Omega)\subset\mathcal E_m(\Omega).$$
So for every $f \in\mathcal E_{m,\chi}(\Omega)$,
$H_m(f)$ is well defined and $-\chi(f) \in L^1(H_m(f))$.

\end{theorem}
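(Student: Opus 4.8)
The plan is to treat the two assertions separately. Fix $f\in\mathcal{E}_{m,\chi}(\Omega)$ together with a defining sequence $(f_j)_j\subset\mathcal{E}^0_m(\Omega)$ with $f_j\searrow f$ and $C:=\sup_j\int_\Omega(-\chi(f_j))H_m(f_j)<+\infty$. The hypothesis $\chi\not\equiv0$ will be used only to fix, once and for all, a number $a>0$ with $-\chi(-a)>0$ (such an $a$ exists since $\chi$ is increasing, $\chi(0)=0$ and $\chi\not\equiv0$). In particular no growth condition on $\chi$, such as $\chi(2t)\le c\,\chi(t)$, will be needed, which is the point of the ``completely different'' argument.

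To prove $\mathcal{E}_{m,\chi}(\Omega)\subset\mathcal{E}_m(\Omega)$ I would first extract from the weighted bound a uniform control of the Hessian masses of the $f_j$. Since $-\chi(f_j)\ge-\chi(-a)>0$ on $\{f_j\le -a\}$, one gets $\int_{\{f_j\le-a\}}H_m(f_j)\le C/(-\chi(-a))=:C'$ for every $j$; inserting this into Lemma~\ref{08} (applicable since $f_j\in\mathcal{E}^0_m(\Omega)\subset\mathcal{F}_m(\Omega)$), with $s=a$, gives $t^m\,Cap_m(\{f_j<-a-t\})\le C'$ for all $t>0$ and all $j$. From this I would deduce two things. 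First, $f\not\equiv-\infty$: otherwise, for a fixed ball $B\Subset\Omega$ one would have $f_j\le-R$ on $\bar B$ for $j$ large (a Dini argument on the compact set $\bar B$), hence $\bar B\subset\{f_j<-R+1\}$ and therefore $Cap_m(\bar B)\le C'/(R-1-a)^m$ for all large $R$, which forces $Cap_m(\bar B)=0$, a contradiction. Second, a local mass bound: for $\omega$ with $\bar\omega\Subset\Omega$, splitting $\Omega=\{f_j>-a\}\cup\{f_j\le-a\}$ and using that $H_m(f_j)=H_m(\max(f_j,-a))$ on the open set $\{f_j>-a\}$ together with $C'$, one obtains $\int_\omega H_m(f_j)\le C'+\int_{\bar\omega}H_m(\max(f_j,-a))$; since $\max(f_j,-a)\searrow\max(f,-a)$ among bounded $m$-subharmonic functions, the Bedford--Taylor convergence theorem bounds $\limsup_j\int_{\bar\omega}H_m(\max(f_j,-a))$ by $\int_{\bar\omega}H_m(\max(f,-a))<+\infty$. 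Hence $\sup_j\int_\omega H_m(f_j)<+\infty$ for every $\omega\Subset\Omega$, and the local characterization of $\mathcal{E}_m(\Omega)$ (see \cite{L1, L2}) yields $f\in\mathcal{E}_m(\Omega)$; in particular $H_m(f)$ is a well-defined positive Radon measure and, since $f_j\searrow f$ within $\mathcal{E}_m(\Omega)$, the same references give $H_m(f_j)\to H_m(f)$ weakly.

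For the integrability $-\chi(f)\in L^1(H_m(f))$ I would argue by approximation and lower semicontinuity. For $l>0$ set $w^l:=\max(f,-l)$ and $w^l_j:=\max(f_j,-l)$, so $w^l_j\searrow w^l$ and $-\chi(w^l_j)\nearrow-\chi(w^l)$, each $-\chi(w^l_j)$ being bounded and lower semicontinuous (here one uses that $\chi$ is continuous, as is assumed in Theorem~B; an increasing $\chi$ can be handled after this harmless extra assumption). For a fixed $N$ and $j\ge N$ one has $\int_\Omega(-\chi(w^l_j))H_m(f_j)\ge\int_\Omega(-\chi(w^l_N))H_m(f_j)$; letting first $j\to+\infty$ (weak convergence of $H_m(f_j)$ tested against the nonnegative lower semicontinuous function $-\chi(w^l_N)$) and then $N\to+\infty$ (monotone convergence) gives $\int_\Omega(-\chi(w^l))H_m(f)\le\liminf_j\int_\Omega(-\chi(w^l_j))H_m(f_j)$. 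Since $w^l_j\ge f_j$ and $\chi$ is increasing, the right-hand side is at most $\liminf_j\int_\Omega(-\chi(f_j))H_m(f_j)\le C$; finally, letting $l\to+\infty$ and using $-\chi(w^l)\nearrow-\chi(f)$ with monotone convergence yields $\int_\Omega(-\chi(f))H_m(f)\le C<+\infty$.

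The main obstacle I anticipate is the first part: converting the finiteness of the weighted energy (which only pins down the single level $a$) into genuine local control of the masses $H_m(f_j)$ through Lemma~\ref{08}, and ruling out $f\equiv-\infty$. Once $f\in\mathcal{E}_m(\Omega)$ is secured, the rest is a routine monotone/semicontinuity argument. Two points to keep in mind are that $f$ need not belong to $\mathcal{F}_m(\Omega)$, so the argument must genuinely be local, and that the definition of $\mathcal{E}_m(\Omega)$ via matching with $\mathcal{F}_m(\Omega)$-functions on relatively compact subsets is being invoked through its equivalent ``locally bounded Hessian mass'' form.
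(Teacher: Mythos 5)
Your route is genuinely different from the paper's. The paper never extracts local mass bounds for $H_m(f_j)$: it builds a convex increasing $\chi_1\geq\chi$ vanishing near $0$, so that $\chi_1(f_1)$ is a bounded negative $m$-sh function dominated by some $\varphi\in\mathcal E_m^0(\Omega)$, and then uses integration by parts (valid since $f_j^G\geq f_j$) to get
$$\sup_j\int_\Omega-\varphi\,H_m(f_j^G)\leq\sup_j\int_\Omega-\chi(f_j)\,H_m(f_j)<\infty;$$
since $H_m(f_j^G)$ is carried by $\overline G$, where $-\varphi$ is bounded below away from $0$, this gives $\sup_j\int_\Omega H_m(f_j^G)<\infty$ and hence $\lim_jf_j^G\in\mathcal F_m(\Omega)$ directly from the definition of $\mathcal F_m$. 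Your preliminary steps (the bound $\int_{\{f_j\leq-a\}}H_m(f_j)\leq C'$, the capacity estimate from Lemma \ref{08}, and the exclusion of $f\equiv-\infty$, which the paper relegates to Proposition \ref{14}) are correct and cleanly argued.

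The gap is the passage from $\sup_j\int_\omega H_m(f_j)<\infty$ for every $\omega\Subset\Omega$ to $f\in\mathcal E_m(\Omega)$. The characterization of $\mathcal E_m$ actually available in \cite{L1, L2} (the $m$-analogue of Cegrell's Theorem 4.5) requires, for each $\omega\Subset\Omega$, a decreasing sequence $u_j\in\mathcal E_m^0(\Omega)$ with $u_j\downarrow f$ on $\omega$ and $\sup_j\int_\Omega H_m(u_j)<\infty$: the mass bound there is \emph{global}. Converting your local bound into that form means passing to $f_j^G$ and controlling $\int_\Omega H_m(f_j^G)$ by the mass of $H_m(f_j)$ near $\overline G$; this is not automatic, since the balayage deposits on $\partial G$ mass coming from all of $\Omega\setminus\overline G$ (already for $m=n=1$ the mass of $\Delta f_j^G$ on $\partial G$ reflects $\Delta f_j(\Omega\setminus\overline G)$, which you do not control uniformly in $j$). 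What you are really invoking is a Blocki-type ``domain of definition'' theorem — one decreasing sequence of bounded approximants with locally uniformly bounded Hessian masses suffices for membership in $\mathcal E_m$ — which is a substantial result in its own right and not merely the definition of $\mathcal E_m$ in equivalent form; you would need to cite it precisely in its $m$-Hessian version, or else close the loop as the paper does by weighting with a fixed $\varphi\in\mathcal E_m^0(\Omega)$. The second half of your argument (truncation at $-l$, lower semicontinuity of $-\chi(\max(f_N,-l))$ tested against the weak limit, then two monotone passages) is sound, but at the price of assuming $\chi$ continuous, which the statement of Theorem \ref{10} does not.
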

\begin{proof}
Since  $\chi\not\equiv 0$ so there exists  $t_0 > 0$   such that $\chi(-t_0) < 0$. Take $\chi_1$ an increasing
function satisfying $\chi_1' = \chi_1'' = 0$ on $[-t_0, 0]$, $\chi_1$ is convex on $]-\infty,-t_0]$ and $\chi_1 \geq\chi$.
Let $g \in \mathcal{SH}_{m}^{-}(\Omega)$, then

$$dd^c\chi_1(g)\wedge\beta^{n-m}=\chi_1''(g) dg\wedge d^cg\wedge\beta^{n-m}+\chi_1'(g) dd^c\chi_1(g)\wedge\beta^{n-m}\geq0.$$
So the function $\chi_1(g) \in \mathcal{SH}_{m}^{-}(\Omega)$.
Now consider $f \in\mathcal E_{m,\chi}(\Omega)$, then by definition there exists a sequence $f_j \in\mathcal E_{m}^0(\Omega)$ that decreases to $f$ and
satisfying
$$\sup_{j\in \mathbb{N}}\int_{\Omega}-\chi(f_j)H_m(f_j)<\infty.$$
By definition of the class $\mathcal{E}_{m}(\Omega)$, it remains to prove that $f$ coincides locally with a function in $\mathcal{F}_{m}(\Omega)$. For this take $G \Subset\Omega$ be a domain and
consider the function
$$f_j^G:=\sup\{g\in\mathcal{SH}_{m}^{-}(\Omega); g\leq f_j\  on\  G\}.$$
We have $f_j^G\in \mathcal E_{m}^0(\Omega)$ and $f_j^G\searrow f$
 on $G$. Take $\varphi \in \mathcal E_{m}^0(\Omega)$ such that $\chi_1( f_1)\leq \varphi.$
We obtain using integration by parts that
\begin{align*}
\sup_{j\in \mathbb{N}}\int_{\Omega}-\varphi H_m(f_j^G)& \leq\sup_{j\in \mathbb{N}}\int_{\Omega}-\varphi H_m(f_j)
\\ &  \leq\sup_{j\in \mathbb{N}}\int_{\Omega}-\chi_1(f_1)H_m(f_j)
\\& \leq\sup_{j\in \mathbb{N}}\int_{\Omega}-\chi_1(f_j)H_m(f_j)
\\ &  \leq\sup_{j\in \mathbb{N}}\int_{\Omega}-\chi(f_j)H_m(f_j)<\infty.
\end{align*}

We deduce that

$$\sup_{j\in \mathbb{N}}\int_{\Omega} H_m(f_j^G) \leq (-\sup_G \varphi)^{-1}\sup_{j\in \mathbb{N}}\int_{\Omega}-\varphi H_m(f_j^G)<\infty.$$

It Follows that  the limit $\displaystyle\lim_{j\rightarrow +\infty} f_j^G\in \mathcal F_{m}(\Omega)$ and therefore $f\in \mathcal E_{m}(\Omega)$.\\

For the second assertion, we have that every  $f\in \mathcal E_{m,\chi}(\Omega)$ is upper semicontinuous, so the sequence of  measures $\mu_j:=-\chi(f_j)H_m(f_j)$ is bounded. Take $\mu$ a cluster point of $\mu_j$ then  $-\chi(f)H_m(f)\leq \mu$. Hence $\int_{\Omega}-\chi(f)H_m(f)<\infty$ and the desired result follows.
\end{proof}

\begin{proposition}
 Then the following statements are equivalent:
\begin{enumerate}
  \item $\chi(-\infty)=-\infty$
  \item $\mathcal{E}_{m,\chi}(\Omega)\subset\mathcal{E}_m^a(\Omega)$.
\end{enumerate}
\end{proposition}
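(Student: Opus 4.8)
The plan is to establish the two implications separately. For $(1)\Rightarrow(2)$ I rely on Theorem \ref{10} and on the fact recalled in the remark after Lemma \ref{08}, namely that for an $m$-subharmonic function $f$ one has $H_m(f)(P)=0$ for every $m$-polar set $P$ if and only if $H_m(f)(\{f=-\infty\})=0$. For $(2)\Rightarrow(1)$ I argue by contraposition, producing an explicit element of $\mathcal{E}_{m,\chi}(\Omega)\setminus\mathcal{E}_m^a(\Omega)$ when $\chi(-\infty)>-\infty$.

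\textbf{Proof of $(1)\Rightarrow(2)$.} Assume $\chi(-\infty)=-\infty$ and let $f\in\mathcal{E}_{m,\chi}(\Omega)$. By Theorem \ref{10}, $f\in\mathcal{E}_m(\Omega)$ and $-\chi(f)\in L^1(H_m(f))$. Since $\chi(-\infty)=-\infty$, the integrand $-\chi(f)$ is identically $+\infty$ on the set $\{f=-\infty\}$; hence finiteness of $\int_{\Omega}-\chi(f)\,H_m(f)$ forces $H_m(f)(\{f=-\infty\})=0$. By the quoted fact, $H_m(f)$ puts no mass on any $m$-polar set, i.e. $f\in\mathcal{E}_m^a(\Omega)$, and $(2)$ follows.

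\textbf{Proof of $(2)\Rightarrow(1)$.} Assume $\chi(-\infty)>-\infty$. Since $\chi$ is increasing with $\chi(0)=0$, we have $\chi(-\infty)\leq\chi(t)\leq 0$ for all $t\leq 0$, so $0\leq-\chi\leq M$ on $\mathbb{R}^{-}$ with $M:=-\chi(-\infty)<\infty$. Hence, for any $g\in\mathcal{F}_m(\Omega)$ with defining sequence $(g_j)\subset\mathcal{E}_m^0(\Omega)$, $g_j\searrow g$, $\sup_j\int_{\Omega}H_m(g_j)<\infty$, one gets $\sup_j\int_{\Omega}-\chi(g_j)H_m(g_j)\leq M\sup_j\int_{\Omega}H_m(g_j)<\infty$, so $\mathcal{F}_m(\Omega)\subset\mathcal{E}_{m,\chi}(\Omega)$. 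It therefore suffices to exhibit $g\in\mathcal{F}_m(\Omega)\setminus\mathcal{F}_m^a(\Omega)$: take the $m$-pluricomplex Green function of $\Omega$ with pole at an interior point $a$, which is modelled near $a$ on $\log|z-a|$ if $m=n$ and on $-|z-a|^{2-2n/m}$ if $m<n$, belongs to $\mathcal{F}_m(\Omega)$, and whose Hessian measure is a positive multiple of the Dirac mass at $a$. Since $\{a\}$ is $m$-polar, this $g$ lies in $\mathcal{E}_{m,\chi}(\Omega)\setminus\mathcal{E}_m^a(\Omega)$, contradicting $(2)$; this proves $(2)\Rightarrow(1)$.

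\textbf{Main obstacle.} Apart from its last step, the argument is routine: it only uses Theorem \ref{10} and the elementary bound $\int-\chi(g_j)H_m(g_j)\leq M\int H_m(g_j)$. The single point requiring real input is the existence, over an arbitrary $m$-hyperconvex domain, of a function in $\mathcal{F}_m(\Omega)$ whose Hessian measure charges an $m$-polar set; I would establish this by invoking the $m$-pluricomplex Green function with an interior pole, together with its membership in $\mathcal{F}_m(\Omega)$ and the positivity of the mass of its Hessian measure at the pole, citing the relevant references.
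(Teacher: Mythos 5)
Your proof is correct and follows the same overall architecture as the paper's (a direct argument for $(1)\Rightarrow(2)$, and contraposition via $\mathcal{F}_m(\Omega)\subset\mathcal{E}_{m,\chi}(\Omega)$ plus $\mathcal{F}_m(\Omega)\not\subset\mathcal{E}_m^a(\Omega)$ for the converse), but the details differ in both halves. For $(1)\Rightarrow(2)$ the paper does not invoke the $L^1$ conclusion of Theorem \ref{10}; instead it works directly with the approximating sequence $(f_j)\subset\mathcal{E}_m^0(\Omega)$ and a Chebyshev-type estimate
$\int_{\{f_j<-t\}}H_m(f_j)\leq(\chi(-t))^{-1}\sup_j\int_\Omega\chi(f_j)H_m(f_j)$,
then lets $j\to\infty$ and $t\to\infty$ to get $H_m(f)(\{f=-\infty\})=0$. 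Your shortcut is cleaner, but it silently requires reading the statement ``$-\chi(f)\in L^1(H_m(f))$'' with the convention $-\chi(f)=+\infty$ on $\{f=-\infty\}$; the way Theorem \ref{10} actually produces that integrability (via a cluster point $\mu$ of $-\chi(f_j)H_m(f_j)$ and the inequality $-\chi(f)H_m(f)\leq\mu$) does not obviously control the mass sitting on $\{f=-\infty\}$, which is exactly the set at issue. The paper's Chebyshev route avoids this circularity, so if you keep your version you should at least note why the finiteness extends over the polar set, or simply reproduce the sublevel-set estimate. Both arguments then use the same fact from the remark after Lemma \ref{08} that vanishing of $H_m(f)$ on $\{f=-\infty\}$ is equivalent to vanishing on all $m$-polar sets. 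For $(2)\Rightarrow(1)$ you go further than the paper: where the paper merely asserts that $\mathcal{F}_m(\Omega)\not\subset\mathcal{E}_m^a(\Omega)$ is ``known'', you exhibit the $m$-Hessian Green function with an interior pole, whose Hessian measure is a Dirac mass at an $m$-polar point; this makes the contrapositive self-contained and is a genuine improvement in rigor over the paper's one-line dismissal.
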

\begin{proof}
We will prove that $(1)\Rightarrow (2)$. For this assume  that $\chi(-\infty)=-\infty$ and take $f\in\mathcal{E}_{m,\chi}(\Omega)$. By definition of the class $\mathcal{E}_{m,\chi}(\Omega)$, there exists a sequence $\{f_j\}\subset\mathcal{E}_m^0$ such that $f_j\searrow f$ and
$$ \sup\limits_{j}\int_{\Omega}-\chi(f_j)H_m(f_j) < +\infty.$$

Since $\chi$ is increasing then for all $t>0$
\begin{align*}
&\int\limits_{\{f_j<-t\}}H_m(f_j)\leq\int\limits_{\{f_j<-t\}}\frac{\chi(f_j)}{\chi(-t)}H_m(f_j)\\
& \leq (\chi(-t))^{-1}\sup\limits_{j}\int\limits_{\Omega}\chi(f_j)H_m(f_j).
\end{align*}

Since the sequence $\{f_j<-t\}$ is  increasing  to $\{f< -t\}$ then by letting $j\to\infty$ we get
$$\int\limits_{\{f<-t\}}H_m(f)\leq (\chi(-t))^{-1}\sup\limits_{j}\int\limits_{\Omega}\chi(f_j)H_m(f_j).$$

Now if we let $t\to +\infty$ we deduce that
$$\int\limits_{\{f=-\infty\}}H_m(f)=0.$$

 Hence, $f\in\mathcal{E}_m^a(\Omega)$.\\

$(2)\Rightarrow (1)$  Assume that  $\chi(-\infty)>-\infty$,  then $\mathcal{F}_m(\Omega)\subset\mathcal{E}_{m,\chi}(\Omega)$. But it is known that $\mathcal{F}_m(\Omega)$ is not a subset of $\mathcal{E}_m^a(\Omega)$. We deduce that $\mathcal{E}_{m,\chi}(\Omega)\not\subset\mathcal{E}_m^a(\Omega)$.

\end{proof}

The rest of this section will be devoted to give a connection between the class $\mathcal E_{m,\chi}(\Omega)$ and the $Cap_m-$capacity of sublevels  $Cap_m(\{f<-t\})$. As a consequence we deduce a complete characterization of the class $\mathcal E_{m}^p(\Omega)$ introduced by Lu \cite{L1} in term of the $Cap_m-$capacity of sublevel. For this we introduce the class $\hat{{\mathcal E}}_{m,\chi}(\Omega)$ as follows:

\begin{definition}
$$
\hat{{\mathcal E}}_{m,\chi}(\Omega) :=\left\{ \varphi \in \mathcal{SH}_m^-(\Omega) \, / \,
\int_{0}^{+\infty} t^m \chi'(-t)  Cap_m(\{\varphi<-t\}) dt<+\infty
\right\}.
$$
\end{definition}
The previous class coincides with the class $\hat{{\mathcal E}}_{\chi}(\Omega)$ given by  Benelkourchi,  Guedj, and  Zeriahi \cite{C-Z-Bel}, it suffices to take $m=n$ to recover it.
In the following proposition we cite some properties of $\hat{{\mathcal E}}_{m,\chi}(\Omega)$ and we give a relationship between $\mathcal E_{m,\chi}(\Omega)$ and $\hat{{\mathcal E}}_{m,\chi}(\Omega)$:

\begin{proposition}\label{12}
\begin{enumerate}
\item The classe $\hat{{\mathcal E}}_{m,\chi}(\Omega)$ is  convex.
 \item For every $f\in \hat{{\mathcal E}}_{m,\chi}(\Omega)$ and $ g \in \mathcal{SH}_m^-(\Omega)$, one has that
$\max(f, g )  \in \hat{{\mathcal E}}_{m,\chi}(\Omega)$.
 \item $\hat{{\mathcal E}}_{m,\chi}(\Omega) \subset {\mathcal E}_{m,\chi}(\Omega)$.
 \item If we denote by $\hat{\chi}(t)$ the function defined by $\hat{\chi}(t) := \chi(t/2)$, then
 $$
{\mathcal E}_{m,\chi}(\Omega) \subset \hat{{\mathcal E}}_{m,\hat{\chi}}(\Omega).
$$
  \end{enumerate}
\end{proposition}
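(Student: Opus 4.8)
The plan is to settle (2) and (1) by elementary monotonicity and subadditivity of $Cap_m$, and to deduce (3) and (4) from Lemma~\ref{08} together with the Fubini-type identity
\begin{equation*}
-\chi\bigl(u(z)\bigr)=\int_{0}^{+\infty}\chi'(-t)\,\mathbf{1}_{\{u<-t\}}(z)\,dt ,\qquad u\in\mathcal{SH}_m^-(\Omega),
\end{equation*}
which holds since $\chi(0)=0$ and $\chi$ is increasing (hence, as is already implicit in the definition of $\hat{\mathcal E}_{m,\chi}$, absolutely continuous with $\chi'\ge 0$); integrating it against a nonnegative Radon measure $\mu$ and applying Tonelli gives $\int_\Omega-\chi(u)\,d\mu=\int_0^{+\infty}\chi'(-t)\,\mu(\{u<-t\})\,dt$.

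Assertions (2) and (1) are then immediate. For (2), $\max(f,g)\in\mathcal{SH}_m^-(\Omega)$ and $\{\max(f,g)<-t\}=\{f<-t\}\cap\{g<-t\}\subset\{f<-t\}$, so $Cap_m(\{\max(f,g)<-t\})\le Cap_m(\{f<-t\})$ for every $t>0$ and the defining integral of $\max(f,g)$ is dominated by that of $f$. For (1), if $f,g\in\hat{\mathcal E}_{m,\chi}(\Omega)$ and $\lambda\in[0,1]$, then $\lambda f+(1-\lambda)g\in\mathcal{SH}_m^-(\Omega)$, and $\lambda f(z)+(1-\lambda)g(z)<-t$ forces $f(z)<-t$ or $g(z)<-t$; hence $\{\lambda f+(1-\lambda)g<-t\}\subset\{f<-t\}\cup\{g<-t\}$ and subadditivity of $Cap_m$ bounds the defining integral of $\lambda f+(1-\lambda)g$ by the sum of those of $f$ and $g$.

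For (3), I would fix $\rho\in\mathcal E_m^0(\Omega)$, $\rho\not\equiv 0$ (hence $\rho<0$ on $\Omega$; such a $\rho$ exists because $\Omega$ is a bounded $m$-hyperconvex domain), and set $f_j:=\max(f,j\rho)$. Then $f_j\searrow f$ on $\Omega$ (as $j\rho\searrow-\infty$ there), $f_j$ is bounded with $f_j\to 0$ at $\partial\Omega$ because $j\rho$ does, and $j\rho\le f_j\le 0$ with $j\rho\in\mathcal E_m^0(\Omega)$ forces $f_j\in\mathcal F_m(\Omega)$ and $\int_\Omega H_m(f_j)\le\int_\Omega H_m(j\rho)<+\infty$; thus $f_j\in\mathcal E_m^0(\Omega)$. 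Applying the Fubini identity with $u=f_j$, $\mu=H_m(f_j)$, then the right-hand inequality of Lemma~\ref{08} (valid since $f_j\in\mathcal F_m(\Omega)$), and then $Cap_m(\{f_j<-t\})\le Cap_m(\{f<-t\})$ (from $f_j\ge f$), yields
\begin{equation*}
\int_\Omega-\chi(f_j)H_m(f_j)=\int_0^{+\infty}\chi'(-t)\int_{\{f_j<-t\}}H_m(f_j)\,dt\le\int_0^{+\infty}t^m\chi'(-t)\,Cap_m(\{f<-t\})\,dt ,
\end{equation*}
a bound finite (because $f\in\hat{\mathcal E}_{m,\chi}(\Omega)$) and independent of $j$; hence $f\in\mathcal E_{m,\chi}(\Omega)$.

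For (4), let $f\in\mathcal E_{m,\chi}(\Omega)$ with $f_j\in\mathcal E_m^0(\Omega)$, $f_j\searrow f$, $M:=\sup_j\int_\Omega-\chi(f_j)H_m(f_j)<+\infty$. The left-hand inequality of Lemma~\ref{08} with $s=t$ gives $t^m Cap_m(\{f_j<-2t\})\le\int_{\{f_j<-t\}}H_m(f_j)$; multiplying by $\chi'(-t)$, integrating over $t>0$ and using the Fubini identity gives $\int_0^{+\infty}t^m\chi'(-t)\,Cap_m(\{f_j<-2t\})\,dt\le\int_\Omega-\chi(f_j)H_m(f_j)\le M$. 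Since $f_j\searrow f$, the open sets $\{f_j<-2t\}$ increase to $\{f<-2t\}$, so $Cap_m(\{f_j<-2t\})\uparrow Cap_m(\{f<-2t\})$ (continuity of $Cap_m$ along increasing sequences, which follows from monotone convergence for the measures $H_m(u)$, $-1\le u\le 0$); letting $j\to+\infty$ by monotone convergence gives $\int_0^{+\infty}t^m\chi'(-t)\,Cap_m(\{f<-2t\})\,dt\le M$. Finally the change of variable $s=2t$, using $\hat\chi'(-s)=\tfrac12\chi'(-s/2)$, turns the left-hand side into $2^{-m}\int_0^{+\infty}s^m\hat\chi'(-s)\,Cap_m(\{f<-s\})\,ds$, whence $f\in\hat{\mathcal E}_{m,\hat\chi}(\Omega)$. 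There is no serious obstacle here: all the pluripotential content is packaged in Lemma~\ref{08}, and the only points needing a moment's attention are that $\max(f,j\rho)$ really belongs to $\mathcal E_m^0(\Omega)$ in (3) and the continuity of $Cap_m$ along increasing sequences of open sets used in (4).
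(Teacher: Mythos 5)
Your proof is correct and follows essentially the same route as the paper: (1)--(2) via the sublevel-set inclusions and the monotonicity/subadditivity of $Cap_m$, (3) via truncation together with the right-hand inequality of Lemma \ref{08}, and (4) via the left-hand inequality with $s=t$ followed by the change of variables $s=2t$. Your only (welcome) deviation is in (3), where you truncate by $\max(f,j\rho)$ with $\rho\in\mathcal{E}_m^0(\Omega)$ instead of the paper's $\max(f,-j)$; this makes membership in $\mathcal{E}_m^0(\Omega)$ (vanishing boundary values and finite total Hessian mass) transparent, a point the paper's choice of truncation leaves implicit.
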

\begin{proof}
$1)$ Let $f,g\in \hat{{\mathcal E}}_{m,\chi}(\Omega)$ and $0\leq \alpha\leq 1$. Since we have
   $$
\left\{ \alpha f+(1-\alpha)g <-t \right\} \subset
\left\{ f<-t \right\} \cup  \left\{ g <-t \right\}
$$
then  $f+\alpha g \in \hat{{\mathcal E}}_{m,\chi}(\Omega)$. The result follows.\\

$2)$ The proof of this assertion is obvious.\\

$3)$ Take  $f \in \hat{{\mathcal E}}_{m,\chi}(\Omega)$. It remains to construct a sequence $f_j\in{\mathcal E}_{m}^0(\Omega)$
satisfying $$\int_\Omega -\chi(f_j) \, H_m(f_j)<\infty.$$
For this, we may assume  without loss of generality that $f \leq 0$.
If we  set $f_j:=\max(f,-j)$ then  $f_j\in{\mathcal E}_{m}^0(\Omega)$. Using Lemma \ref{08} we get that
\begin{eqnarray*}
\int_\Omega -\chi (f_j) \, H_m(f_j) &=&
\int_0^{+\infty} \chi'(-t) H_m(f_j)(f_j < -t) dt \\
&\leq& \int_0^{+\infty} \chi'(-t) t^m Cap_m(f<-t) dt
<+\infty. \\
\end{eqnarray*}

It follows that $f \in {\mathcal E}_{m,\chi}(\Omega)$.\\
$4)$  The proof of this assertion follows directly using the same argument as in $3)$ and  the second inequality in Lemma \ref{08} for $t=s$.
\end{proof}

\begin{proposition}\label{13}
 Assume that  for all $t<0$ one has $\chi(t) <0$, then for all  $f\in {\mathcal E}_{m,\chi}(\Omega)$ one has
  $$\displaystyle\limsup_{z\to w } f(z) = 0 , \
\forall w \in \partial \Omega.$$
\end{proposition}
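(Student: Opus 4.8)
The plan is to argue by contradiction, feeding the hypothesis $\limsup_{z\to w}f<0$ into the integrability condition defining $\hat{{\mathcal E}}_{m,\hat\chi}(\Omega)$, which $f$ satisfies by Proposition~\ref{12}(4); the decisive point is that an open subset of $\Omega$ whose closure meets $\partial\Omega$ has infinite $Cap_m$-capacity relative to $\Omega$. Concretely, since $f\in{\mathcal E}_{m,\chi}(\Omega)$ there is a sequence $f_j\in{\mathcal E}^0_m(\Omega)$ with $f_j\searrow f$; as each $f_j$ has boundary value $0$ and $f\le f_j\le 0$, it is immediate that $\limsup_{z\to w}f(z)\le 0$ for every $w\in\partial\Omega$. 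Suppose now, for contradiction, that $\limsup_{z\to w_0}f(z)=-2c<0$ for some $w_0\in\partial\Omega$ and $c>0$. Then there is $r>0$ with $f<-c$ throughout $V:=B(w_0,r)\cap\Omega$, so $V\subset\{f<-t\}$ and hence
$$Cap_m(\{f<-t\},\Omega)\ \ge\ Cap_m(V,\Omega)\qquad\text{for every }t\in(0,c].$$

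The crucial — and most delicate — step is the claim that $Cap_m(V,\Omega)=+\infty$. One can produce $\psi\in{\mathcal{SH}}_m(\Omega)$ with $-1\le\psi\le 0$ and $\int_V H_m(\psi)=+\infty$: starting from the continuous $m$-subharmonic exhaustion $\rho$ supplied by $m$-hyperconvexity (normalized so that $-1\le\rho<0$), the slowed-down function $\psi:=-(-\rho)^{\alpha}$ with $\alpha\in(0,1)$ small is again $m$-subharmonic, since $s\mapsto-(-s)^{\alpha}$ is convex and increasing, and its Hessian density blows up near $\partial\Omega$ — in particular near $w_0$, which lies in $\overline V$ — fast enough that $\int_V H_m(\psi)$ diverges; then $Cap_m(V,\Omega)\ge\int_V H_m(\psi)=+\infty$. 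This capacity estimate is the only point I expect to require real care.

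Granting this, I would finish as follows. By Proposition~\ref{12}(4), $f\in\hat{{\mathcal E}}_{m,\hat\chi}(\Omega)$ with $\hat\chi(t)=\chi(t/2)$, that is,
$$\int_0^{+\infty}t^m\,\hat\chi'(-t)\,Cap_m(\{f<-t\},\Omega)\,dt<+\infty .$$
On the other hand, by the lower bound above,
$$\int_0^{+\infty}t^m\,\hat\chi'(-t)\,Cap_m(\{f<-t\},\Omega)\,dt\ \ge\ Cap_m(V,\Omega)\int_0^{c}t^m\,\hat\chi'(-t)\,dt ,$$
and the last integral is strictly positive, because $t^m>0$ on $(0,c)$ and
$$\int_0^{c}\hat\chi'(-t)\,dt=\hat\chi(0)-\hat\chi(-c)=-\chi(-c/2)>0$$
by the hypothesis $\chi(s)<0$ for all $s<0$ together with $\chi(0)=0$. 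Since $Cap_m(V,\Omega)=+\infty$, the right-hand side is $+\infty$, contradicting the previous display. Hence no such $w_0$ exists, i.e. $\limsup_{z\to w}f(z)=0$ for every $w\in\partial\Omega$.

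As an alternative (if the inclusion ${\mathcal E}_{m,\chi}(\Omega)\subset{\mathcal N}_m(\Omega)$ is available, so that $\widetilde f=0$), one can replace the capacity computation by a monotonicity argument for $\widetilde{\ \cdot\ }$: if $\limsup_{z\to w_0}f<-2c<0$ then $f\le c\,u$, where $u\le 0$ is the $m$-maximal ``$m$-harmonic measure'' of a closed cap $S\subset\partial\Omega$ around $w_0$; since $u$ is $m$-maximal, $\widetilde{cu}=cu$, and $u<0$ somewhere because $S$ is not boundary-polar (one checks $u\le-h$ with $h$ the classical harmonic measure of $S$), so $\widetilde f\le cu<0$, contradicting $\widetilde f=0$.
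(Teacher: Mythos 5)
Your overall architecture coincides with the paper's: argue by contradiction, observe that a bad boundary point $w_0$ forces $V:=B(w_0,r)\cap\Omega\subset\{f<-t\}$ for $0<t\le c$, invoke $Cap_m(V,\Omega)=+\infty$, and contradict the finiteness of $\int_0^{+\infty}t^m\hat\chi'(-t)\,Cap_m(\{f<-t\})\,dt$ supplied by Proposition \ref{12}(4). The closing bookkeeping (positivity of $\int_0^{c}t^m\hat\chi'(-t)\,dt$ from $\chi(-c/2)<0=\chi(0)$) is fine, modulo the same implicit regularity convention on $\chi$ that the paper itself uses when it writes $\chi'$.

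The gap is exactly at the step you flagged: your proposed witness for $Cap_m(V,\Omega)=+\infty$ does not work for a general $m$-hyperconvex domain. Writing $\psi=-(-\rho)^{\alpha}=g\circ\rho$ and using $(d\rho\wedge d^c\rho)^2=0$, one gets
$$H_m(\psi)=\bigl(g'(\rho)\bigr)^{m}H_m(\rho)+m\bigl(g'(\rho)\bigr)^{m-1}g''(\rho)\,(dd^c\rho)^{m-1}\wedge d\rho\wedge d^c\rho\wedge\beta^{n-m},$$
so the blow-up of $g'$ and $g''$ near $\partial\Omega$ only helps where $H_m(\rho)$ or $(dd^c\rho)^{m-1}\wedge d\rho\wedge d^c\rho\wedge\beta^{n-m}$ is nondegenerate. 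The definition of $m$-hyperconvexity provides only \emph{some} continuous exhaustion $\rho$, and both of these currents can vanish identically near $w_0$: for instance, if $m\ge 2$ and $\partial\Omega$ is flat near $w_0$ with $\rho=\mathrm{Re}\,z_1$ there, then $dd^c\rho=0$ and hence $H_m(\psi)\equiv 0$ on $V$, not $+\infty$. (Your computation is correct for the ball with $\rho=|z|^2-1$, but it uses strict positivity of $dd^c\rho$, which cannot be restored by perturbing $\rho$ without destroying $\rho(z)\to 0$ as $z\to w_0$.) The capacity statement itself is true, but the argument that works --- and the one the paper uses --- goes through relative extremal functions: exhaust $V$ by compacts $K_j$, note $Cap_m(K_j)=\int_\Omega H_m(h^*_{K_j,\Omega})$, and show that boundedness of these masses would put the decreasing limit $h^*_{V,\Omega}$ in $\mathcal{F}_m(\Omega)$, which is impossible since it equals $-1$ on an open set clustering at $w_0\in\partial\Omega$. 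Your alternative sketch via $\mathcal{N}_m$ is closer to this in spirit, but it relies on ${\mathcal E}_{m,\chi}(\Omega)\subset{\mathcal N}_m(\Omega)$, which in the paper is only established afterwards (Theorem \ref{15}), so it cannot be invoked here without checking that no circularity is introduced.
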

 \begin{proof}

  Since by hypothesis we have for all $t<0$; $\chi(t) <0$ so we can assume, without loss of generality, that the length of the set $\{t > 0; t < t_0\ and \ \chi'(-t)\neq 0\}$ is positive for all $t_0 > 0$. We suppose by contradiction that there is $w_0 \in \partial \Omega$ such that $\displaystyle\limsup_{z\to w_0 } f(z) = \varepsilon < 0$. Then there is a ball $B_0$ centered
at $w_0$ satisfying $B_0 \cap \Omega \subset \{f< \frac{\varepsilon}{2}\}$. If we consider  $(K_j)_j$ to  a sequence of regular compact subsets  so that for all $j$ one has $K_j \subset K_{j+1}$ and $B_0 \cap \Omega = \cup K_j $. Then the extremal function $h_{K_j,\Omega}$ belongs to   ${\mathcal E}_{m}^0(\Omega)$ and decreases to  $h_{E,\Omega}$. It is easy to check that $h_{E,\Omega}\not \in {\mathcal F}_{m}(\Omega)$. By the definition of the class ${\mathcal F}_{m}(\Omega)$ we obtain
$$\sup_j  Cap_m(K_j)=\sup _j \int_\Omega H_m( f_{K_j,\Omega}) =
+\infty.$$
So
 $$Cap_m (B_0 \cap \Omega)=+\infty.$$
 We deduce that
 $$Cap_m (\{f< -s\})=+\infty, \ \forall s \leq -\varepsilon /2,$$
 hence
 $$\int_{0}^{+\infty} t^m \chi'(-t)  Cap_m(\{f<-t\}) dt=+\infty.$$

We get a  contradiction with the fact that ${\mathcal E}_{m,\chi}(\Omega) \subset \hat{{\mathcal E}}_{m,\hat{\chi}}(\Omega)$.
 \end{proof}
\begin{proposition}\label{14}
Assume that  $\chi\not\equiv 0.$ If there exists a sequence $(f_k)\subset{\mathcal E}_{m}^0(\Omega)$ such that
$$\sup_{k\in \mathbb{N}}\int_\Omega -\chi(f_k)H_m(f_k)<\infty,$$
then the function $f:=\displaystyle\lim_{k\rightarrow +\infty} f_k\not\equiv-\infty$ and therefore $f\in{\mathcal E}_{m,\chi}(\Omega)$.
\end{proposition}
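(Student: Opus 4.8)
The plan is to reduce everything to showing that $f := \lim_{k\to+\infty} f_k \not\equiv -\infty$, where, as in the definition of $\mathcal{E}_{m,\chi}(\Omega)$, the sequence is understood to be decreasing, $f_k \searrow f$. Once this is known, $f$ is a decreasing limit of functions in $\mathcal{SH}_m^-(\Omega)$ which is not identically $-\infty$, hence $f \in \mathcal{SH}_m^-(\Omega)$, and then the sequence $(f_k) \subset \mathcal{E}_m^0(\Omega)$ together with $f_k \searrow f$ and $\sup_k \int_\Omega -\chi(f_k) H_m(f_k) < \infty$ exhibits $f$ as an element of $\mathcal{E}_{m,\chi}(\Omega)$ by definition. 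So assume, for contradiction, that $f_k \searrow -\infty$ at every point of $\Omega$.

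First I would extract a uniform bound on the Hessian mass of the $f_k$ on a fixed sublevel set. Since $\chi \not\equiv 0$ and $\chi(0)=0$ (the standing convention of this section), there is $t_0>0$ with $\chi(-t_0)<0$. On $\{f_k \le -t_0\}$ one has $-\chi(f_k) \ge -\chi(-t_0) > 0$, so
$$\int_\Omega -\chi(f_k)\, H_m(f_k) \ \ge\ -\chi(-t_0)\int_{\{f_k \le -t_0\}} H_m(f_k),$$
whence $A := (-\chi(-t_0))^{-1}\sup_k \int_\Omega -\chi(f_k)\, H_m(f_k)$ is finite and bounds $\int_{\{f_k \le -t_0\}} H_m(f_k)$ uniformly in $k$. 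Since $f_k \in \mathcal{E}_m^0(\Omega) \subset \mathcal{F}_m(\Omega)$, Lemma \ref{08} applied with $s=t_0$ then gives, for every $t>0$ and every $k$,
$$Cap_m(\{f_k < -t_0 - t\}) \ \le\ t^{-m}\int_{\{f_k < -t_0\}} H_m(f_k) \ \le\ A\,t^{-m}.$$

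Finally I would reach a contradiction using a fixed nonempty open set. Choose $U$ with $\varnothing \neq U \Subset \Omega$; then $Cap_m(U,\Omega) > 0$, as one sees by testing against a truncated multiple of $|z-a|^2 - R^2$, which is smooth and strictly $m$-subharmonic on $U$ and thus has $H_m$-mass a positive multiple of the Lebesgue measure of $U$. Fix $t>0$. Since $f_k \searrow -\infty$ on the compact set $\overline{U}$, the open sets $\{f_k < -t_0 - t\}$ increase with $k$ (monotonicity of $(f_k)$) and cover $\overline{U}$, so by compactness one of them already contains $\overline{U}$; hence $Cap_m(U) \le Cap_m(\{f_k < -t_0 - t\}) \le A\,t^{-m}$ for a suitable $k$. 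Letting $t\to+\infty$ forces $Cap_m(U)=0$, a contradiction. Therefore $f\not\equiv-\infty$, which completes the proof.

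I expect the delicate point to be the second step: one must apply Lemma \ref{08} to the sublevel set $\{f_k < -t_0-t\}$ and not to $\Omega$, since the total masses $\int_\Omega H_m(f_k)$ need not be uniformly bounded while the masses on $\{f_k \le -t_0\}$ are, and the constant $A$ must be kept independent of $k$. The ``uniform'' decrease of $(f_k)$ to $-\infty$ on $\overline{U}$ is then just monotonicity, upper semicontinuity and compactness, needing no regularity of the $f_k$.
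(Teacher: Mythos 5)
Your proof is correct, and while it shares the paper's key tool --- the left-hand inequality of Lemma \ref{08}, which converts the uniform bound on $\int_\Omega -\chi(f_k)H_m(f_k)$ into the decay estimate $Cap_m(\{f_k<-t_0-t\})\le A\,t^{-m}$ --- the way you exploit that estimate is genuinely different. The paper integrates it against $t^m\chi'(-t)\,dt$, passes to the limit by monotone convergence to get $\int_0^{+\infty}t^m\chi'(-t)\,Cap_m(\{f<-t\})\,dt<\infty$, i.e.\ it places $f$ in the auxiliary class $\hat{\mathcal{E}}_{m,\hat\chi}(\Omega)$, and then reads off both $f\not\equiv-\infty$ and $f\in\mathcal{E}_{m,\chi}(\Omega)$ via Proposition \ref{12}. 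You instead keep the estimate pointwise in $t$, fix one nonempty open $U\Subset\Omega$ of positive capacity, and use upper semicontinuity, monotonicity and compactness of $\overline{U}$ to force $Cap_m(U)\le A\,t^{-m}$ for every $t$, a clean contradiction; membership in $\mathcal{E}_{m,\chi}(\Omega)$ then comes for free since the given sequence is itself a witness in the definition. Your route is more elementary and arguably more robust at the last step: deducing $f\not\equiv-\infty$ from the finiteness of the weighted integral, as the paper does, implicitly requires either $\int_0^\infty t^m\chi'(-t)\,dt=+\infty$ or $Cap_m(\Omega,\Omega)=+\infty$ (not obvious when $\chi$ is bounded with rapidly decaying derivative), whereas you only need $Cap_m(U)>0$, which you verify directly with a smooth strictly $m$-subharmonic test function. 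Both arguments, like the statement itself, tacitly assume $(f_k)$ is decreasing --- otherwise $\lim_k f_k$ and the monotone passage to the limit are not meaningful --- and you are right to make that reading explicit.
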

\begin{proof}
 Using the hypothesis we observe that the length of the set $\{t > 0; t < t_0\ and \ \chi'(-t)\neq 0\}$ is positive. By lemma \ref{08} we get
 $$s^mCap_m (\{f_k<-2s\})\leq\int_{\{f_k<-s\}} H_m(f_k).$$
 Then
 \begin{eqnarray*}
 \int_{0}^{+\infty} t^m \chi'(-t)  Cap_m(\{f<-t\}) dt &=&\lim_{k\to\infty}\int_{0}^{+\infty} t^m \chi'(-t)  Cap_m(\{f_k<-t\}) dt\\
 &\leq &\lim_{k\to\infty}2^m\int_{0}^{+\infty}  \chi'(-t) \int_{\{f_k<-t\}} H_m(f_k) dt\\
&\leq&2^m \sup_{k\in \mathbb{N}}\int_\Omega -\chi(f_k)H_m(f_k)<\infty. \\
\end{eqnarray*}
Note that in the previous inequality we have used the convergence monotone theorem.
We conclude that $f\not\equiv-\infty$ and therefore $f\in{\mathcal E}_{m,\chi}(\Omega)$.
\end{proof}
\begin{theorem}\label{15}
  Assume that for all $t<0$ one has $\chi(t) <0$. Then
$${\mathcal E}_{m,\chi}(\Omega) \subset {\mathcal N}_{m}(\Omega).$$
\end{theorem}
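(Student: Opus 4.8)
\emph{Proof proposal.}
Fix $f \in \mathcal{E}_{m,\chi}(\Omega)$. By Theorem \ref{10}, $f \in \mathcal{E}_m(\Omega)$ and $-\chi(f) \in L^1(H_m(f))$; in particular $\widetilde f$ is well defined, $m$-maximal, and $\widetilde f \le 0$ since $f \le 0$. The whole point is to upgrade this to $\widetilde f \equiv 0$, which is exactly the statement $f\in\mathcal N_m(\Omega)$.

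The plan is to squeeze two facts out of the hypothesis $\chi(t)<0$ for all $t<0$ and feed them into the maximal functions $f^k$ from Definition \ref{3}. First, Proposition \ref{13} gives $\limsup_{z\to w}f(z)=0$ for every $w\in\partial\Omega$, hence also $\limsup_{z\to w}\widetilde f(z)=0$ because $f\le\widetilde f\le0$. Second, since $-\chi(-t)>0$ for $t>0$, the finiteness $\int_\Omega -\chi(f)\,H_m(f)<+\infty$ yields
$$\int_{\{f<-t\}} H_m(f)\ \le\ \frac{1}{-\chi(-t)}\int_\Omega -\chi(f)\,H_m(f)\ <\ +\infty \qquad (t>0),$$
so $\mathbf{1}_{\{f<-t\}}H_m(f)$ is a finite measure for each $t>0$ (equivalently, passing through $\hat{\mathcal E}_{m,\hat\chi}(\Omega)$ via Proposition \ref{12} as in the proof of Proposition \ref{13}, $Cap_m(\{f<-t\})<+\infty$ for every $t>0$).

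Now I would analyse $f^k$. By construction $f^k=f$ on $\Omega\setminus\Omega_k$ and $f^k$ is $m$-maximal on $\Omega_k$, so $H_m(f^k)=0$ on $\Omega_k$ while $H_m(f^k)=H_m(f)$ on $\Omega\setminus\overline{\Omega_k}$; thus all the mass of $H_m(f^k)$ sits on $\Omega\setminus\Omega_k$. Fixing $s>0$ and using the capacity--mass estimate of Lemma \ref{08} (localized, or applied to a truncation agreeing with $f^k$ near $\partial\Omega$ so as to land in $\mathcal F_m$), one bounds $Cap_m(\{f^k<-s\})$ above by $2^m s^{-m}$ times the $H_m(f^k)$-mass of $\{f^k<-s/2\}$; since that mass is carried by $\Omega\setminus\Omega_k$, it is dominated by $\int_{\{f<-s/2\}\cap(\Omega\setminus\overline{\Omega_k})}H_m(f)$ plus whatever mass $H_m(f^k)$ places on the hypersurface $\partial\Omega_k$. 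As $\mathbf{1}_{\{f<-s/2\}}H_m(f)$ is a finite measure and $\Omega\setminus\overline{\Omega_k}\downarrow\emptyset$, the first term tends to $0$ as $k\to+\infty$. Letting $k\to+\infty$ and using $\{\widetilde f<-s\}\subseteq\{f^k<-s\}$ for every $k$ (because $\widetilde f\ge f^k$), one gets $Cap_m(\{\widetilde f<-s\})=0$ for every $s>0$, i.e.\ $\{\widetilde f<0\}$ is $m$-polar; since $\widetilde f$ is subharmonic and vanishes off a set of Lebesgue measure zero, $\widetilde f\equiv0$, so $f\in\mathcal N_m(\Omega)$.

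I expect the main obstacle to be precisely the term coming from $\partial\Omega_k$: one must show that the mass created where the $m$-maximal extension inside $\Omega_k$ meets $f$ does not prevent $Cap_m(\{f^k<-s\})\to0$. This is where Proposition \ref{13} genuinely enters, roughly: since $f$ tends to $0$ along $\partial\Omega$ in the limsup sense, for large $k$ the extra mass lies inside $\{f^k\ge-s/2\}$ and is irrelevant to $\{f^k<-s\}$; alternatively one arranges the exhaustion $(\Omega_k)$ so that $H_m(f^k)=\mathbf{1}_{\Omega\setminus\Omega_k}H_m(f)$ with no hypersurface contribution. Turning either of these into a clean estimate — together with the subsidiary nuisance that $f^k$ need not itself belong to $\mathcal F_m(\Omega)$, so that Lemma \ref{08} must be localized — is the technical heart of the argument.
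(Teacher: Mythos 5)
Your reduction of the statement to $Cap_m(\{\widetilde f<-s\})=0$ for all $s>0$ is the right target, and the preliminary observations (Proposition \ref{13}, the finiteness of $\mathbf{1}_{\{f<-t\}}H_m(f)$ for each fixed $t>0$) are correct. But the step on which everything rests — applying the left-hand inequality of Lemma \ref{08} to the functions $f^k$ — is a genuine gap, not the ``subsidiary nuisance'' you describe. That inequality, $t^m Cap_m(\{u<-s-t\})\le\int_{\{u<-s\}}H_m(u)$, is proved in the paper only for $u\in\mathcal F_m(\Omega)$, and it is false for general $u\in\mathcal E_m(\Omega)$: the constant $u\equiv-1$ belongs to $\mathcal E_m(\Omega)$, is $m$-maximal, so $H_m(u)=0$, yet $Cap_m(\{u<-1/2\})=Cap_m(\Omega)>0$. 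The proof of the lemma uses the comparison principle in a form that needs $u\le t h_K^*-s$ near $\partial\Omega$, i.e.\ quantitative zero boundary values; for $f^k$ this is essentially the assertion $\widetilde{f^k}=\widetilde f=0$ that you are trying to prove, so the argument is circular at its core. Indeed, if $f$ is already maximal then $f^k=f$ and $H_m(f^k)=0$, and your estimate would yield $f\equiv0$ for every maximal $f\in\mathcal E_m(\Omega)$ — contradicted by $f\equiv-1$. The limsup boundary information from Proposition \ref{13} is too weak to repair this, and the second hole you acknowledge (possible mass of $H_m(f^k)$ on $\partial\Omega_k$) is likewise left open.

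The paper circumvents exactly this difficulty by never applying Lemma \ref{08} to $f$ or to $f^k$: after reducing (via Proposition \ref{12} and the stability of $\hat{\mathcal E}_{m,\hat\chi}(\Omega)$ under passing to majorants) to showing that every $m$-maximal $f\in\mathcal E_{m,\chi}(\Omega)$ vanishes, it applies the lemma to the approximants $f_j\in\mathcal E_m^0(\Omega)\subset\mathcal F_m(\Omega)$ furnished by the definition of $\mathcal E_{m,\chi}(\Omega)$, where the estimate is legitimate, and then lets $j\to\infty$ using $\{f_j<-s\}\nearrow\{f<-s\}$ to transfer the bound $\int_0^{+\infty}\chi'(-s/2)\,s^m\,Cap_m(\{f<-s\})\,ds\lesssim\sup_j\int_\Omega-\chi(f_j)H_m(f_j)$ to $f$ itself; the hypothesis $\chi(t)<0$ for $t<0$ then forces $Cap_m(\{f<-s\})=0$ for every $s>0$. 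If you want to keep your $f^k$-based scheme, you would first have to establish the capacity estimate for $f^k$ by exploiting the class membership of $f$ (for instance by comparison with the $\mathcal E_m^0$ approximants), at which point you are essentially back to the paper's argument.
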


\begin{proof}
By proposition \ref{12}, it suffices to prove that every maximal function $f \in {\mathcal E}_{m,\chi}(\Omega)$ is identically equal to 0. Take a sequence  $f_j \in{\mathcal E}_{m}^0(\Omega)$ as in the definition of the class ${\mathcal E}_{m,\chi}(\Omega)$. So we obtain using  Lemma \ref{08} that
\begin{eqnarray*}
\int_{0}^{+\infty} \chi'(\frac{-s}{2}) f^m Cap_m(\{f<-s\}) ds &=&\lim_{j\to\infty}\int_{0}^{+\infty}   \chi'(\frac{-s}{2}) s^m Cap_m(\{f_j<-s\}) ds
 \\
&\leq&2^m\lim_{j\to\infty}\int_{0}^{+\infty}  \chi'(-s) \int_{(f_j<-s)} H_m(f_j) ds
 \\
&=&2^m\lim_{j\to\infty}\int_\Omega -\chi(f_j)H_m(f_j). \\
\end{eqnarray*}
Since the maximality of  $f \in {\mathcal E}_{m}(\Omega)$ is equivalent to  $H_m(f) = 0$, we deduce that
$$\lim_{j\to\infty}\int_\Omega -\chi(f_j)H_m(f_j)=0.$$

So $Cap_m(\{f <-s\}) = 0$, $\forall s > 0$. It follows that $f\equiv 0$. The proof of the theorem is completed.
\end{proof}
Now we give a complete characterization of  ${\mathcal E}_{m,\chi}(\Omega)$ in term of ${\mathcal N}_{m}(\Omega)$. We will prove essentially the following result
\begin{corollary}
If for all $t<0$; $\chi(t) <0$ then
 $${\mathcal E}_{m,\chi}(\Omega)=\left\{ f \in {\mathcal N}_m(\Omega) \, / \,
\chi(f) \in L^1(H_m(f)) \right\}.$$
\end{corollary}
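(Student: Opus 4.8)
I plan to prove the two inclusions separately; ``$\subseteq$'' is essentially already contained in what precedes, while ``$\supseteq$'' is the substantial part. For ``$\subseteq$'': let $f\in{\mathcal E}_{m,\chi}(\Omega)$. The standing hypothesis $\chi(t)<0$ for all $t<0$ forces $\chi\not\equiv 0$, so Theorem \ref{10} applies and gives that $H_m(f)$ is well defined with $-\chi(f)\in L^1(H_m(f))$, i.e. $\chi(f)\in L^1(H_m(f))$; and Theorem \ref{15}, whose hypothesis is precisely our assumption on $\chi$, gives $f\in{\mathcal N}_m(\Omega)$. Hence $f$ belongs to the right-hand side.

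For ``$\supseteq$'': fix $f\in{\mathcal N}_m(\Omega)$ with $\chi(f)\in L^1(H_m(f))$, and, using the $m$-hyperconvexity of $\Omega$, fix $\varphi_0\in{\mathcal E}_m^0(\Omega)$ with $\varphi_0<0$ on $\Omega$. Set $f_j:=\max(f,j\varphi_0)$. A routine verification (correct boundary values, and the comparison principle to bound the total Hessian mass by $j^m\int_\Omega H_m(\varphi_0)$) shows $f_j\in{\mathcal E}_m^0(\Omega)$, and since $j\varphi_0\searrow-\infty$ on $\Omega$ the sequence $(f_j)_j$ decreases to $f$. By the very definition of ${\mathcal E}_{m,\chi}(\Omega)$ it then suffices to establish the uniform bound
$$\sup_{j\ge 1}\int_\Omega-\chi(f_j)\,H_m(f_j)<+\infty.\qquad(\star)$$
For this I would use the layer-cake identity $\int_\Omega-\chi(f_j)\,H_m(f_j)=\int_0^{+\infty}\chi'(-t)\,H_m(f_j)(\{f_j<-t\})\,dt$, valid because $\chi(0)=0$ and $\chi$ is increasing, together with $\{f_j<-t\}\subseteq\{f<-t\}$ and a comparison estimate bounding $H_m(f_j)(\{f_j<-t\})$ by a constant multiple of $H_m(f)(\{f<-t\})$; it is exactly here that $f\in{\mathcal N}_m(\Omega)$ --- equivalently, that no Hessian mass of the truncations escapes to $\partial\Omega$ --- is used. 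Integrating in $t$ and invoking $\chi(f)\in L^1(H_m(f))$ then yields $(\star)$.

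The step I expect to be the main obstacle is precisely this uniform estimate $(\star)$: one must control the Hessian masses of the bounded truncations $f_j$ on the sublevel sets $\{f_j<-t\}$ uniformly in $j$, which is the one place where membership of $f$ in ${\mathcal N}_m(\Omega)$, rather than merely in ${\mathcal E}_m(\Omega)$, is needed. An alternative organisation routes through $\hat{{\mathcal E}}_{m,\chi}(\Omega)$: approximating $f$ from above by $f_j=\max(f,j\varphi_0)\in{\mathcal F}_m(\Omega)$ and letting $j\to+\infty$ (the sets $\{f<-t\}$ are open because $f$ is upper semicontinuous, so $Cap_m$ is continuous along them), one extends the left-hand inequality of Lemma \ref{08} to $t^m\,Cap_m(\{f<-s-t\})\le\int_{\{f<-s\}}H_m(f)$ for $f\in{\mathcal N}_m(\Omega)$; then taking $s=t$ and changing variables bounds $\int_0^{+\infty}t^m\,\chi'(-t)\,Cap_m(\{f<-t\})\,dt$ by a constant times $\int_\Omega-\chi(f)\,H_m(f)<+\infty$, so $f\in\hat{{\mathcal E}}_{m,\chi}(\Omega)$ and hence $f\in{\mathcal E}_{m,\chi}(\Omega)$ by Proposition \ref{12}. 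The same difficulty reappears in this second route as the justification that the capacity estimate of Lemma \ref{08} passes to the limit.
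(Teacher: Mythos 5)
Your ``$\subseteq$'' direction coincides with the paper's (Theorems \ref{10} and \ref{15}) and is fine. The gap is in ``$\supseteq$'', and it sits exactly at the step you yourself flag as the main obstacle: the uniform estimate $(\star)$ for the truncations $f_j=\max(f,j\varphi_0)$ is asserted, not proved, and the ``comparison estimate bounding $H_m(f_j)(\{f_j<-t\})$ by a constant multiple of $H_m(f)(\{f<-t\})$'' is not a standard fact. The measure $H_m(f_j)$ splits into a part carried by $\{f>j\varphi_0\}$, where it does equal $H_m(f)$, and a part carried by $\{f\le j\varphi_0\}$, where it is governed by $j^mH_m(\varphi_0)$ together with the contribution concentrated near $\{f=j\varphi_0\}$, and is in no pointwise sense dominated by $H_m(f)$. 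A comparison principle controls the \emph{total} mass $\int_{\{f<j\varphi_0\}}H_m(f_j)$ by $\int_{\{f<j\varphi_0\}}H_m(f)$, but your layer-cake computation needs this localized to every sublevel set $\{f_j<-t\}$, uniformly in $j$ and $t$, and that localization is precisely what is missing. Your fallback route through $\hat{{\mathcal E}}_{m,\chi}(\Omega)$ has a second, independent defect: Lemma \ref{08} with $s=t$ gives $t^m\,Cap_m(\{f<-2t\})\le\int_{\{f<-t\}}H_m(f)$, so after integrating against $\chi'(-t)\,dt$ and changing variables you only land in $\hat{{\mathcal E}}_{m,\hat{\chi}}(\Omega)$ with $\hat{\chi}(t)=\chi(t/2)$, hence in ${\mathcal E}_{m,\hat{\chi}}(\Omega)$ by Proposition \ref{12}; to pass from ${\mathcal E}_{m,\hat{\chi}}$ back to ${\mathcal E}_{m,\chi}$ you need a doubling hypothesis of the type $\chi(2t)\le a\,\chi(t)$, which is exactly the condition the paper is at pains to avoid (see the remark preceding Theorem \ref{10}; note also that Proposition \ref{12} itself only gives ${\mathcal E}_{m,\chi}\subset\hat{{\mathcal E}}_{m,\hat{\chi}}$, for the same reason).

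The paper circumvents both difficulties by choosing the approximants through the Dirichlet problem rather than by truncation: Theorem 5.9 of \cite{HP} produces $f_j\in{\mathcal E}_m^0(\Omega)$ with $H_m(f_j)=1_{\{f>j\rho\}}H_m(f)$, so $H_m(f_j)\le H_m(f)$ by construction, $f_j\ge f$ gives $-\chi(f_j)\le-\chi(f)$, and the uniform bound $(\star)$ is immediate from monotone convergence. The whole weight of the hypothesis $f\in{\mathcal N}_m(\Omega)$ is then carried by the identification of the limit: the comparison principle shows $(f_j)$ decreases to some $\widetilde f\ge f$, and a uniqueness theorem (Theorem 2.10 in \cite{A}) forces $\widetilde f=f$. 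If you wish to keep your truncation scheme you must actually prove the uniform sublevel-set comparison you invoke; otherwise the Dirichlet-problem construction is the way through.
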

\begin{proof}
The first inclusion is a direct deduction from theorem \ref{10} and theorem \ref{15}.
It suffices to prove the reverse inclusion
$$\left\{ f \in {\mathcal N}_m(\Omega) \, / \,
\chi(f) \in L^1(H_m(f)) \right\}\subset{\mathcal E}_{m,\chi}(\Omega).$$
Take $f \in {\mathcal N}_{m}(\Omega)$ satisfying $\int_\Omega -\chi(f)H_m(f)<\infty$. It suffices to construct sequence $f_j \in{\mathcal E}_{m}^0(\Omega)$ that decreases to $f$ and satisfies
 $$\sup_j\int_\Omega -\chi(f_j)H_m(f_j)<\infty.$$
 Let $\rho$ be an exhaustion function for $\Omega$  ($\Omega = \{\rho< 0\}$). The theorem 5.9 in \cite{HP} guarantee that for all $j \in \mathbb{N}$, there is a function $f_j \in{\mathcal E}_{m}^0(\Omega)$ satisfying $H_m(f_j) = 1_{\{f>j\rho\}}H_m(f)$. We have  $H_m(f_{j})\leq H_m(f_{j+1})\leq H_m(f)$, so we get that $f_j\geq f_{j+1}$ using the comparison principle and   $(f_j)_j$converges to a function $\widetilde{f}$. It is easy to check that $\widetilde{f}\geq f$. Now following  the proof of  Theorem \ref{10} we deduce the existence of   a negative m$-$sh function $g$ satisfying $\int_\Omega -g H_m(f)<\infty$. If follows by  Theorem 2.10 \cite{A}
that $\widetilde{f}= f$. Thus the monotone convergence theorem gives
$$\int_\Omega -\chi(f_j)H_m(f_j)=\int_\Omega -\chi(f_j)1_{\{f>j\rho}\}H_m(f)\rightarrow\int_\Omega -\chi(f)H_m(f)<\infty.$$
 \end{proof}
Now we will extend the theorem A to the class $\mathcal{E}_{m,\chi}(\Omega)$.
\begin{theorem}\label{11}

  Assume that $\chi$ is  continuous, $\chi(-\infty)>-\infty$ and   $f, f_j\in \mathcal{E}_{m}(\Omega)$ for all $j\in \mathbb{N}$. If there exists $g\in \mathcal{E}_{m}(\Omega)$ satisying $f_j\geq g$ on $\Omega$ then:
  \begin{enumerate}
    \item If $f_j$ converges to $f$ in $Cap_{m-1}-$capacity then $\displaystyle\liminf_{j\rightarrow +\infty}-\chi(f_j)H_m(f_j)\geq -\chi(f)H_m(f).$
    \item If $f_j$ converges to $f$ in $Cap_{m}-$capacity then $-\chi(f_j)H_m(f_j)$ converges weakly to $-\chi(f)H_m(f).$
  \end{enumerate}
\end{theorem}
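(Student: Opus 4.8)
The plan is to reduce both assertions to weak estimates for the measures $\varphi\,(-\chi(f_j))\,H_m(f_j)$ tested against an arbitrary $0\le\varphi\in C_0^\infty(\Omega)$, using that $\chi(-\infty)>-\infty$ makes $-\chi$ a nonnegative (recall the running convention $\chi(0)=0$) bounded function which extends continuously to $[-\infty,0]$, and that convergence in $Cap_m$-capacity is at least as strong as convergence in $Cap_{m-1}$-capacity, so that (2) will follow from (1) together with a matching upper bound. I first fix $\varphi$ and a domain with $\mathrm{supp}\,\varphi\Subset\Omega_1\Subset\Omega$; passing to a subsequence along which $f_j\to f$ almost everywhere, which is harmless for a $\liminf$ or a weak-limit statement, and using $f_j\ge g$, one gets $f\ge g$ on $\Omega$.

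For (1) the device is truncation together with the locality of $H_m$ on plurifine open sets. Put $f_j^a=\max(f_j,-a)$ and $f^a=\max(f,-a)$; these are uniformly bounded, $|f_j^a-f^a|\le|f_j-f|$ so $f_j^a\to f^a$ in $Cap_{m-1}$-capacity, and on the plurifine open set $\{f_j>-a\}$ one has $H_m(f_j)=H_m(f_j^a)$ and $\chi(f_j)=\chi(f_j^a)$. Fixing $a$ outside the countable set of levels charged by one of the measures $H_m(f_j),H_m(f_j^a),H_m(f),H_m(f^a)$, and keeping the tail through the inequality $-\chi(f_j)\ge-\chi(-a)$ on $\{f_j\le-a\}$, I obtain
\[
\int_\Omega\varphi\,(-\chi(f_j))\,H_m(f_j)\ \ge\ \int_\Omega\varphi\,(-\chi(f_j^a))\,H_m(f_j^a)+(-\chi(-a))\Big(\int_\Omega\varphi\,H_m(f_j)-\int_\Omega\varphi\,H_m(f_j^a)\Big).
\]
Letting $j\to+\infty$: the truncated terms converge by the weak convergence theorem for Hessian measures of uniformly bounded $m$-sh functions converging in $Cap_{m-1}$-capacity (Theorem 3.8 in \cite{HP}, together with $-\chi(f_j^a)\to-\chi(f^a)$ in capacity, which follows from the uniform continuity of $\chi$ on $[-a,0]$), while $\liminf_j\int_\Omega\varphi\,H_m(f_j)\ge\int_\Omega\varphi\,H_m(f)$ — a strengthening of Theorem \ref{04} that exploits the domination $f_j\ge g$, and which under the $Cap_m$-hypothesis of (2) is contained in Theorem \ref{07}. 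Since for our admissible $a$ we have $\int_\Omega\varphi\,(-\chi(f^a))\,H_m(f^a)=\int_{\{f>-a\}}\varphi\,(-\chi(f))\,H_m(f)$ and $\int_\Omega\varphi\,H_m(f^a)=\int_{\{f>-a\}}\varphi\,H_m(f)$, this gives
\[
\liminf_{j\to+\infty}\int_\Omega\varphi\,(-\chi(f_j))\,H_m(f_j)\ \ge\ \int_{\{f>-a\}}\varphi\,(-\chi(f))\,H_m(f)+(-\chi(-a))\int_{\{f\le-a\}}\varphi\,H_m(f).
\]
Finally I let $a\to+\infty$: monotone convergence gives the first term the limit $\int_{\{f>-\infty\}}\varphi\,(-\chi(f))\,H_m(f)$, and since $-\chi(-a)\to-\chi(-\infty)$, $\int_{\{f\le-a\}}\varphi\,H_m(f)\to\int_{\{f=-\infty\}}\varphi\,H_m(f)$ and $-\chi(f)\equiv-\chi(-\infty)$ on $\{f=-\infty\}$, the right-hand side tends to $\int_\Omega\varphi\,(-\chi(f))\,H_m(f)$, which proves (1).

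For (2), part (1) already yields $\liminf_j(-\chi(f_j))H_m(f_j)\ge(-\chi(f))H_m(f)$, so it suffices to prove the reverse inequality for each $\varphi$. Running the same truncation, but estimating $-\chi(f_j)\le-\chi(-\infty)$ on $\{f_j\le-a\}$ and using this time the full convergence $\int_\Omega\varphi\,H_m(f_j)\to\int_\Omega\varphi\,H_m(f)$ (Theorem \ref{07}, now legitimate because $Cap_m$-convergence is assumed) together with Theorem 3.8 in \cite{HP}, one gets $\limsup_j\int_\Omega\varphi\,(-\chi(f_j))\,H_m(f_j)\le\int_{\{f>-a\}}\varphi\,(-\chi(f))\,H_m(f)+(-\chi(-\infty))\int_{\{f\le-a\}}\varphi\,H_m(f)$, which tends to $\int_\Omega\varphi\,(-\chi(f))\,H_m(f)$ as $a\to+\infty$. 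Hence $\varphi\,(-\chi(f_j))H_m(f_j)\to\varphi\,(-\chi(f))H_m(f)$ weakly, and since every subsequence has a further subsequence with this limit, the whole sequence converges.

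The hard part will be the behaviour near the polar set $\{f=-\infty\}$: the truncations only see the mass of $(-\chi(f))H_m(f)$ carried by $\{f>-\infty\}$, so the contribution $(-\chi(-\infty))\int_{\{f=-\infty\}}\varphi\,H_m(f)$ has to be recovered separately. This is where all the hypotheses are genuinely used — the lower bound $f_j\ge g\in\mathcal E_m(\Omega)$ prevents Hessian mass from escaping, $\chi(-\infty)>-\infty$ identifies the limiting weight on $\{f=-\infty\}$ as the constant $-\chi(-\infty)$, and the mass convergence $\int_\Omega\varphi\,H_m(f_j)\to\int_\Omega\varphi\,H_m(f)$ (resp. its $\liminf$ version in (1)) supplies the polar part. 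The genuinely delicate points are: making Theorem \ref{07} applicable, since it requires $f\in\mathcal P_m(\Omega)$ — automatic when $g\in\mathcal P_m(\Omega)$ by Proposition \ref{03}(1), and otherwise to be obtained by approximating $f$ from below inside $\mathcal P_m(\Omega)$; and, in (1), the fact that only $Cap_{m-1}$-convergence is available, so one must use that Theorem 3.8 in \cite{HP} needs only $Cap_{m-1}$-convergence of the (uniformly bounded) truncations and that the mass bound $\liminf_j\int_\Omega\varphi\,H_m(f_j)\ge\int_\Omega\varphi\,H_m(f)$ still holds under this weaker capacity.
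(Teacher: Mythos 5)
Your route is genuinely different from the paper's (which handles (1) by approximating $f$ from above by continuous $\psi_k\in\mathcal E_m^0\cap\mathcal C(\Omega)$, using a Hartogs-type argument to compare $f_j$ with $\psi_k$ on $\operatorname{supp}\varphi$, and handles (2) via quasicontinuity and a decomposition of $\Omega$ into small-capacity open sets $O_k$). Your truncation-and-bookkeeping scheme is attractive and the algebra of the inequality
$\int\varphi(-\chi(f_j))H_m(f_j)\ge\int\varphi(-\chi(f_j^a))H_m(f_j^a)+(-\chi(-a))\bigl(\int\varphi H_m(f_j)-\int\varphi H_m(f_j^a)\bigr)$
is correct, as is the reassembly of the polar contribution via $-\chi(-a)\to-\chi(-\infty)$. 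But two of the steps you defer are exactly the substance of the theorem, and as written they are not covered by the results you invoke.

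First, the mass input $\liminf_j\int_\Omega\varphi\,H_m(f_j)\ge\int_\Omega\varphi\,H_m(f)$ under only $Cap_{m-1}$-convergence is not a ``strengthening of Theorem \ref{04}'' that you can take for granted: Theorem \ref{04} assumes $Cap_m$-convergence and, crucially, only yields the non-polar part $1_{\{f>-\infty\}}H_m(f)$ on the right, while your argument needs the full measure $H_m(f)$, polar part included; and Theorem \ref{07} is not applicable either, since it requires $f\in\mathcal P_m(\Omega)$, which is not among the hypotheses ($g$ is only assumed to lie in $\mathcal E_m(\Omega)$, so Proposition \ref{03}(1) does not help, and ``approximating $f$ from below inside $\mathcal P_m(\Omega)$'' is not substantiated). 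What you actually need is the convergence theorem for Hessian measures of sequences converging in $Cap_{m-1}$-capacity and dominated below by a fixed function of $\mathcal E_m(\Omega)$ (Theorem 3.10 in \cite{HP}, which is the external result the paper itself uses); without citing or proving that, part (1) is incomplete. Second, in part (2) your $\limsup$ bound silently passes to the limit in quantities such as $\int_{\{f_j\le-a\}}\varphi\,H_m(f_j)$ and $\int_{\{f_j>-a\}}\varphi(-\chi(f_j^a))H_m(f_j^a)$, i.e.\ masses of weakly convergent measures on the \emph{varying} sets $\{f_j\le -a\}$. Weak convergence alone does not give $\limsup_j\int_{\{f_j\le-a\}}\varphi\,H_m(f_j^a)\le\int_{\{f\le-a\}}\varphi\,H_m(f^a)$; one has to exchange $\{f_j\le-a\}$ for a fixed quasi-closed set up to a set of small capacity and control the error, which is precisely the quasicontinuity machinery (the open sets $O_k$ and the extremal functions $h_{O_k,\Omega}$) that the paper's proof of (2) is built around. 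Until you supply those two ingredients, the proposal is a plausible outline rather than a proof.
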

\begin{proof}
$(1)$ Take a test function $\varphi\in C^{\infty}_{0}(\Omega)$ such that $0\leq \varphi\leq 1$. Using \cite{L2} there exist $\psi_k\in \mathcal{E}^0_{m}(\Omega)\cap \mathcal{C}(\Omega)$ with $\psi_k\geq f$ and $\psi_k\searrow f$ in $\Omega$. For a fixed integer $k\geq 1$ there exists, by  \cite{H},  $j_0\in \mathbb{N}$ such that $f_j\geq \psi_k$ on $supp\ \varphi$ for all $j\geq j_0$. So by Theorem 3.10 in \cite{HP}, we obtain that for all $k\geq 1$ one has
$$ \displaystyle\liminf_{j\rightarrow +\infty}\int_{\Omega}-\varphi\chi(f_j)H_m(f_j)\geq \displaystyle\liminf_{j\rightarrow +\infty}\int_{\Omega}-\varphi\chi(\psi_k)H_m(f_j)= \int_{\Omega}-\varphi\chi(\psi_k)H_m(f).$$
Now if we let $k$ tends to $+\infty$ then by the Lebesgue monotone convergence theorem, we get
  $$ \displaystyle\liminf_{j\rightarrow +\infty}\int_{\Omega}-\varphi\chi(f_j)H_m(f_j)\geq\int_{\Omega}-\varphi\chi(f)H_m(f).$$
  The result follows.\\
  $(2)$ Without loss of generality one can assume that $\chi(-\infty)=-1$. Let $\varphi\in C^{\infty}_{0}(\Omega)$ such that $0\leq \varphi\leq 1$.
  We claim that
  $$\displaystyle\limsup_{j\rightarrow +\infty}\int_{\Omega}-\varphi\chi(f_j)H_m(f_j)\leq\int_{\Omega}-\varphi\chi(f)H_m(f).\qquad (*)$$
  Indeed, by the quasicontinuity of $f$ and $g$ with respect to the capacity $Cap_m$, we obtain that for every $k\in \mathbb{N}$ there exist an open subset $O_k$ of $\Omega$ and a function $\widetilde{f}_k\in \mathcal{C}(\Omega)$ such that $Cap_m(O_k)\leq \frac{1}{2^k}$ and $\widetilde{f}_k=f$ on $\Omega \setminus O_k$ and $g\geq -\alpha_k$ on $supp \varphi \setminus O_k$ for some $\alpha_k>0$.
  Let $\varepsilon >0$, then by Theorem 3.6 in \cite{Hi}  one has
  $$\begin{array}{lcl}
     \displaystyle\int_{\Omega}-\varphi\chi(f_j)H_m(f_j)& = & \displaystyle\int_{\Omega \setminus O_k}-\varphi\chi(f_j)H_m(f_j)  + \displaystyle\int_{O_k}-\varphi\chi(f_j)H_m(f_j) \\
    & \leq & \displaystyle\int_{\Omega \setminus O_k}-\varphi\chi(f_j)H_m(f_j)  + \displaystyle\int_{O_k}-\varphi H_m(f_j)\\
    & \leq & \displaystyle\int_{\{f_j\leq f-\varepsilon\} \setminus O_k}-\varphi\chi(f_j)H_m(f_j)\\
     &+&\displaystyle\int_{\{f_j> f-\varepsilon\} \setminus O_k}-\varphi\chi(f_j)H_m(f_j)+ \displaystyle\int_{O_k}-\varphi H_m(f_j)\\
     & \leq & \displaystyle\int_{\{f_j\leq f-\varepsilon\} \setminus O_k}-\varphi H_m(f_j)\\
     &+&\displaystyle\int_{\Omega \setminus O_k}-\varphi\chi(f-\varepsilon)H_m(f_j) + \displaystyle\int_{\Omega}-\varphi h_{O_k,\Omega} H_m(f_j)\\
     & \leq & \displaystyle\int_{\{f_j< f-\varepsilon\} \setminus O_k} H_m(\max(f_j, -\alpha_k))\\
     &+&\displaystyle\int_{\Omega \setminus O_k}-\varphi\chi(\widetilde{f}_k-\varepsilon)H_m(f_j) + \displaystyle\int_{\Omega}-\varphi h_{O_k,\Omega} H_m(f_j)\\
      & \leq & \alpha_k^m Cap_m({\{f_j< f-\varepsilon\}\cap supp \varphi})\\
     &+&\displaystyle\int_{\Omega \setminus O_k}-\varphi\chi(\widetilde{f}_k-\varepsilon)H_m(f_j) + \displaystyle\int_{\Omega}-\varphi h_{O_k,\Omega} H_m(f_j).\\

  \end{array}
  $$
  If we let $j$ goes to $+\infty$, we get using theorem 3.8 \cite{HP} that
  $$\begin{array}{lcccl}
     \displaystyle\limsup_{j\rightarrow +\infty}\int_{\Omega}-\varphi\chi(f_j)H_m(f_j)& \leq &\displaystyle\int_{\Omega \setminus O_k}-\varphi\chi(\widetilde{f}_k-\varepsilon)H_m(f)& + &\displaystyle\int_{\Omega}-\varphi h_{O_k,\Omega} H_m(f) \\
  \end{array}
  $$
  If we let $\varepsilon\rightarrow 0$, we obtain
  $$\begin{array}{lcccl}
     \displaystyle\limsup_{j\rightarrow +\infty}\int_{\Omega}-\varphi\chi(f_j)H_m(f_j)& \leq &\displaystyle\int_{\Omega \setminus O_k}-\varphi\chi(\widetilde{f}_k) H_m(f)& + &\displaystyle\int_{\Omega}-\varphi h_{O_k,\Omega} H_m(f) \\
     &\leq &\displaystyle\int_{\Omega \setminus \{f=-\infty\}}-\varphi\chi(f)H_m(f)& + &\displaystyle\int_{\Omega}-\varphi h_{\bigcup_{l=k}^{\infty}O_l,\Omega} H_m(f) \qquad (**) \\
  \end{array}
  $$
  Now as $\bigcup_{l=k}^{\infty}O_l\searrow O$ when $k\longrightarrow +\infty$ then
  $$Cap_m(O)\leq\lim_{k\longrightarrow \infty}Cap_m\left(\displaystyle\bigcup_{l=k}^{\infty}O_l\right)\leq \lim_{k\longrightarrow \infty}\sum_{l=k}^{\infty}Cap_{m}(O_l)\leq \lim_{k\longrightarrow \infty}\frac{1}{2^{k-1}}$$
  so there exists an $m-$polar set $M$ such that   $h_{\bigcup_{l=k}^{\infty}O_{l,\Omega}}\nearrow 0$ when $k\longrightarrow +\infty$ on $\Omega\setminus M$. So if we take  $k\longrightarrow +\infty$ in $(**)$, we obtain
  $$\begin{array}{lcccl}
     \displaystyle\limsup_{j\rightarrow +\infty}\int_{\Omega}-\varphi\chi(f_j)H_m(f_j)&\leq &\displaystyle\int_{\Omega \setminus \{f=-\infty\}}-\varphi\chi(f)H_m(f)& + &\displaystyle\int_{M}\varphi  H_m(f)\\
      &\leq &\displaystyle\int_{\Omega \setminus \{f=-\infty\}}-\varphi\chi(f)H_m(f)& + &\displaystyle\int_{\{f=-\infty\}}-\varphi\chi(f)  H_m(f)\\
      &=&\displaystyle\int_{\Omega}-\varphi\chi(f)  H_m(f).
  \end{array}
  $$
  This proves the claim $(*)$. Moreover since $f_j$ converges in $Cap_{m}-$capacity so it converges in  $Cap_{m-1}-$capacity. Using the assertion $(a)$ we obtain
  $$\liminf_{j\rightarrow +\infty}\int_{\Omega}-\varphi\chi(f_j)  H_m(f_j)\geq \int_{\Omega}-\varphi\chi(f)  H_m(f).$$
  If we combine the last inequality with $(**)$ we get
  $$\lim_{j\rightarrow +\infty}\int_{\Omega}-\varphi\chi(f_j)  H_m(f_j)= \int_{\Omega}-\varphi\chi(f)  H_m(f),$$
  for every $\varphi \in \mathcal{C}_{0}^{\infty}(\Omega)$ with $0\leq \varphi \leq 1$. Hence we get the desired result.
  \end{proof}

 Now we will be intrusted to the problem of subextention in the class ${\mathcal E}_{m,\chi}(\Omega)$. For  $\Omega\Subset\tilde{\Omega}\Subset \mathbb{C}^n$ and $f\in {\mathcal E}_{m,\chi}(\Omega)$, we say that $\tilde{f}\in {\mathcal E}_{m,\chi}(\tilde{\Omega})$ is a subextention of $f$ if $\tilde{f} \leq f$ on $\Omega$. In the following theorem we prove that every function $f\in {\mathcal E}_{m,\chi}(\Omega)$ has a subextention.

\begin{theorem}
  Let $\tilde{\Omega}$ be a m$-$hyperconvex domain such that $\Omega\Subset\tilde{\Omega}\Subset \mathbb{C}^n$. If  $\chi(t) <0$ for all $t<0$  and $f\in {\mathcal E}_{m,\chi}(\Omega)$  then is $\tilde{f}\in {\mathcal E}_{m,\chi}(\tilde{\Omega})$
satisfying
$$\int_{\tilde{\Omega}} -\chi(\tilde{f})H_m(\tilde{f})\leq\int_\Omega -\chi(f)H_m(f)$$
and $\tilde{f} \leq f$ on $\Omega$.
\end{theorem}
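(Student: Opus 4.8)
Since $\chi(t)<0$ for every $t<0$ we have $\chi\not\equiv 0$, so Theorem \ref{10} applies and yields $f\in\mathcal E_m(\Omega)$ together with $\int_\Omega-\chi(f)H_m(f)<+\infty$, while Theorem \ref{15} gives $f\in\mathcal N_m(\Omega)$. The first step is to produce a \emph{good} approximating sequence. Exactly as in the construction in the proof of the Corollary characterizing $\mathcal E_{m,\chi}(\Omega)$ (fix an exhaustion $\rho$ of $\Omega$ and use Theorem 5.9 in \cite{HP} to obtain $f_j\in\mathcal E_m^0(\Omega)$ with $H_m(f_j)=\mathbf 1_{\{f>j\rho\}}H_m(f)$), one gets $f_j\searrow f$ and, since $f_j\ge f$ and $\chi$ is increasing,
$$\int_\Omega-\chi(f_j)H_m(f_j)=\int_\Omega-\chi(f_j)\mathbf 1_{\{f>j\rho\}}H_m(f)\le\int_\Omega-\chi(f)H_m(f)=:M<+\infty .$$

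Next I would subextend each $f_j$. Put
$$\tilde f_j:=\Bigl(\sup\bigl\{\varphi\in\mathcal{SH}_m^-(\tilde\Omega):\ \varphi\le f_j\ \text{on}\ \Omega\bigr\}\Bigr)^{*}.$$
Using a continuous $m$-sh exhaustion of the $m$-hyperconvex domain $\tilde\Omega$ as a lower barrier on $\overline\Omega$ (possible because $f_j$ is bounded and $\Omega\Subset\tilde\Omega$), one checks the routine facts: $\tilde f_j$ is a bounded negative $m$-sh function on $\tilde\Omega$ with zero boundary values on $\partial\tilde\Omega$, $\tilde f_j\le f_j$ on $\Omega$, $\tilde f_j$ is $m$-maximal on $\tilde\Omega\setminus\overline\Omega$, and $\tilde f_{j+1}\le\tilde f_j$ (because $f_{j+1}\le f_j$). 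The essential point is the mass estimate
$$\int_{\tilde\Omega}-\chi(\tilde f_j)H_m(\tilde f_j)\le\int_\Omega-\chi(f_j)H_m(f_j)\le M .$$
To obtain it one uses that $H_m(\tilde f_j)$ carries no mass on $\tilde\Omega\setminus\overline\Omega$ nor on $\{\tilde f_j<f_j\}\cap\Omega$, so it is concentrated on the contact set $\{\tilde f_j=f_j\}$, where $-\chi(\tilde f_j)=-\chi(f_j)$; together with the Bedford--Taylor comparison $\mathbf 1_{\{\tilde f_j=f_j\}}H_m(\tilde f_j)\le\mathbf 1_{\{\tilde f_j=f_j\}}H_m(f_j)$ on $\Omega$ (the $m$-subharmonic analogue of the classical subextension estimate of Cegrell--Zeriahi) this gives the inequality; the same argument with the weight $\equiv 1$ gives $\int_{\tilde\Omega}H_m(\tilde f_j)\le\int_\Omega H_m(f_j)<+\infty$, so in fact $\tilde f_j\in\mathcal E_m^0(\tilde\Omega)$.

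Finally I would pass to the limit. Since $\tilde f_j\in\mathcal E_m^0(\tilde\Omega)$ decreases and $\sup_j\int_{\tilde\Omega}-\chi(\tilde f_j)H_m(\tilde f_j)\le M<+\infty$ with $\chi\not\equiv 0$, Proposition \ref{14} applied on $\tilde\Omega$ shows that $\tilde f:=\lim_j\tilde f_j\not\equiv-\infty$ and $\tilde f\in\mathcal E_{m,\chi}(\tilde\Omega)$; clearly $\tilde f\le f$ on $\Omega$. For the energy bound, the measures $-\chi(\tilde f_j)H_m(\tilde f_j)$ have total mass $\le M$, hence a weak cluster point $\nu$ exists, and (lower semicontinuity of the weighted Hessian mass along the decreasing sequence $\tilde f_j\searrow\tilde f$, argued as in the last part of the proof of Theorem \ref{10}) one has $\nu\ge-\chi(\tilde f)H_m(\tilde f)$, so that
$$\int_{\tilde\Omega}-\chi(\tilde f)H_m(\tilde f)\le\nu(\tilde\Omega)\le\liminf_{j\to+\infty}\int_{\tilde\Omega}-\chi(\tilde f_j)H_m(\tilde f_j)\le M=\int_\Omega-\chi(f)H_m(f),$$
which is the asserted estimate.

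The step I expect to be the main obstacle is the mass estimate for the sup-envelope $\tilde f_j$: verifying that $H_m(\tilde f_j)$ is supported on the contact set, that it is dominated there by $H_m(f_j)$, and that no spurious mass is created on $\partial\Omega$. Everything else is either available in the excerpt (Theorems \ref{10}, \ref{15}, Proposition \ref{14}, Lemma \ref{08}) or amounts to the standard barrier and monotonicity bookkeeping for envelopes.
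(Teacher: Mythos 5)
Your proposal is correct and follows the same overall route as the paper: subextend each term of an approximating sequence in $\mathcal E_m^0(\Omega)$, establish the uniform weighted-energy bound on the subextensions via the contact-set argument, invoke Proposition \ref{14} on $\tilde\Omega$ to get $\tilde f\not\equiv-\infty$ and $\tilde f\in\mathcal E_{m,\chi}(\tilde\Omega)$, and pass to the limit for both the energy inequality and $\tilde f\le f$. There are two differences worth recording. First, where you construct the subextension $\tilde f_j$ by hand as a sup-envelope and sketch the proof that $H_m(\tilde f_j)$ lives on the contact set $\{\tilde f_j=f_j\}$ and is dominated there by $H_m(f_j)$, the paper simply cites Lemma 3.2 in \cite{Maw}, which packages exactly these facts; your identification of this as the delicate step is accurate, and your sketch is the standard Cegrell--Zeriahi argument, so nothing is lost. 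Second, your choice of approximating sequence is actually more careful than the paper's: the paper takes "the sequence as in the definition" of $\mathcal E_{m,\chi}(\Omega)$, which only guarantees $\sup_k\int_\Omega-\chi(f_k)H_m(f_k)<+\infty$, and yet asserts the bound by the exact constant $\int_\Omega-\chi(f)H_m(f)$; by instead using the sequence with $H_m(f_j)=\mathbf 1_{\{f>j\rho\}}H_m(f)$ from the characterization corollary (legitimate here since Theorems \ref{10} and \ref{15} put $f$ in $\mathcal N_m(\Omega)$ with $\chi(f)\in L^1(H_m(f))$), you obtain $\int_\Omega-\chi(f_j)H_m(f_j)\le\int_\Omega-\chi(f)H_m(f)$ directly, which is what the stated inequality actually requires. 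Your final cluster-point/semicontinuity step matches what the paper leaves implicit in its "Then by $(*)$".
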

\begin{proof}
  Let $f\in {\mathcal E}_{m,\chi}(\Omega)$ and  $f_k\in {\mathcal E}_{m}^0(\Omega)$ be the  sequence as in the definition of the class ${\mathcal E}_{m,\chi}(\Omega)$. We obtain using lemma 3.2 in \cite{Maw}  that for every  $k \in \mathbb{N}$, there exists a subextension $\tilde{f}_k$ of $f_k$. It follows that
  \begin{eqnarray*}
\int_{\tilde{\Omega}} -\chi(\tilde{f_k})H_m(\tilde{f_k}) &=&\int_{\{\tilde{f}_k=f_k\}\cap\Omega} -\chi(\tilde{f_k})H_m(\tilde{f_k})
 \\
&\leq&\int_{\{\tilde{f}_k=f_k\}\cap\Omega} -\chi(f_k)H_m(f_k) \\
&\leq&\int_{\Omega} -\chi(f_k)H_m(f_k). \\
\end{eqnarray*}
So we obtain
$$\sup_k\int_{\tilde{\Omega}} -\chi(\tilde{f_k})H_m(\tilde{f_k})\leq\int_{\Omega} -\chi(f)H_m(f)<\infty.\quad (*)$$
Using the proposition \ref{14} we get that the function $\tilde{f} = \lim_{k\rightarrow\infty} \tilde{f}_k\not\equiv -\infty$ and $\tilde{f}\in {\mathcal E}_{m,\chi}(\tilde{\Omega})$. Then by $(*)$
$$\int_{\tilde{\Omega}} -\chi(\tilde{f})H_m(\tilde{f})\leq\int_{\Omega} -\chi(f)H_m(f)<\infty.$$
It follows by the Comparison Principle that for all $k \in \mathbb{N}$ one has $\tilde{f}_k \leq f_k$ on $\Omega$. If we let $k$
goes to $\infty$, we deduce that $\tilde{f} \leq f$ on $\Omega$.
\end{proof}

$\mathbf{Acknowledgments}$
Authors extend their appreciation to the Deanship of Scientific Research at Jouf University for funding this work through research Grant no. DSR-2021-03-03134.\\


\begin{thebibliography}{00}
\bibitem{BT82} E. Bedford and B. A. Taylor, {\em  A new capacity for plurisubharmonic functions}, Acta Math., {\bf 149} (1982), 1--40.
\bibitem{BT76}E. Bedford and B.A.Taylor, {\em The Dirichlet problem for a complex Monge-Amp\`ere operator}, Invent. Math.,{\bf 37}(1976), 1--44.
\bibitem{Slimane} Benelkourchi, S.: Weighted pluricomplex energy. Potential Anal. 31(2009), 1–20.
\bibitem{C-Z-Bel}Benelkourchi, S., Guedj, V., Zeriahi, A.: Plurisubharmonic functions with weak singularities. In: Passare,
M. (ed.) Complex Analysis and Digital Geometry: Proceedings from the Kiselmanfest,Uppsala Universitet (2007) pp. 57–73.
\bibitem{Slimane1} Benelkourchi, S.: Approximation of weakly singular plurisubharmonic functions, Internat. J. Math. 22 (2011) 937–946.
\bibitem{HP} Hung, V.V., Phu, N.V.:{\em Hessian measures on m-polar sets and applications to the complex Hessian equations}, Complex Var. Elliptic Equ. {\bf 8} (2017), 1135–1164.
\bibitem{A} A. El Gasmi {\em The Dirichlet problem for the complex Hessian operator in the class $N_m(\Omega,f)$}, Mathematica scandinavica {\bf 127} (2021), 287--316.
    \bibitem{L1} Lu, C. H.,{\em A variational approach to complex Hessian equations in $C^n$}, J. Math. Anal. Appl. 431 (2015), no. 1, 228-259.
\bibitem {L2} H. C. Lu, Equations Hessiennes complexes, Ph.D. thesis, Université Paul Sabatier, Toulouse, France (2012), http://thesesups.ups-tlse.fr/1961/.
\bibitem {Hai} L.M. Hai, P.H. Hiep, N.X. Hong, Phu, N.V.: The Monge-Ampère type equation in the weighted pluricomplex energy class. Int. J. Math. 25(5), 1450042 (2014).
\bibitem {Bl1}  Z. Błocki, Weak solutions to the complex Hessian equation, Ann. Inst. Fourier (Grenoble) 55, 5
(2005), 1735-1756.
\bibitem{Ce1}U. Cegrell, Pluricomplex energy, Acta. Math. 180 (1998), 187-217.
131--147.
\bibitem{Ce2}U. Cegrell, The general definition of the comlex Monge-Amp\`ere operator, Ann. Inst.Fourier (Grenoble) 54 (2004), 159-179.
  \bibitem{H} L. H$\ddot{o}$rmander,{\em Notion of Convexity, Progess in Mathematics}, Birkh$\ddot{a}$user,Boston, {\bf  127} (1994).
 \bibitem{Hi} P. H. Hiep, {\em Convergence in capacity}, Ann. Polon. Math. {\bf 93} (2008), 91-99.
 \bibitem{Hung} Hung, V.V.: Local property of a class of m-subharmonic functions. Vietnam J.Math. 44(3)(2016), 621-630.
\bibitem{Hai}Le Mau Hai and  Trieu Van Dung, {\em Subextension of $m-$Subharmonic Functions}, Vietnam Journal of Mathematics (2020) 48:47–57.
\bibitem{Maw}Le Mau Hai · Vu Van Quan, {\em Weak Solutions to the Complex m-Hessian Equation on Open Subsets of $\mathbb{C}^n$}, Complex Analysis and Operator Theory {\bf279}(2019):4007–4025.
\bibitem{SA}
A.S. Sadullaev and B.I. Abdullaev, \emph{Potential theory in the class of msubharmonic functions,} Tr. Mat. Inst. Steklova \textbf{279} (2012), 166-192.
\bibitem{Van}
Van Thien Nguyen,  \emph{Maximal m-subharmonic functions and the Cegrell class $\mathcal{N}_m$,}Indagationes Mathematicae \textbf{30} (2019), 717-739.

\end{thebibliography}
\end{document}